\DeclareMathOperator{\Stab}{Stab}
\title{Some remarks on L-equivalence for cubic fourfolds and hyper-Kähler manifolds}
\author{Simone Billi}
\address[Simone Billi]{Fakultät für Mathematik und Informatik, Universität des Saarlandes, Campus E2.4, 66123 Saarbrücken, Germany}
\email{billi@math.uni-sb.de}
\author{Lucas Li Bassi}
\address[Lucas Li Bassi]{}
\email{lucas.libassi@gmail.com }
\thanks{The authors are members of INdAM-GNSAGA}
\date{}
\begin{document}

\maketitle

\begin{abstract}
    We prove that if two very general cubic fourfolds are L-equivalent then they are isomorphic, and we observe that there exist special cubic fourfolds which are L-equivalent but not isomorphic. When the cubic fourfolds are very general in certain Hassett divisors, we prove that if they are L-equivalent then they are also Fourier--Mukai partners. 
    We also provide further examples in support of the fact that L-equivalent projective hyper-Kähler manifolds should be D-equivalent, as conjectured by Meinsma.  
\end{abstract}
\section{Introduction}

\subsection{L-equivalence}
The \textit{Grothendieck ring of varieties} \(K_0(\Var_k)\) over a field \(k\) is the free abelian group generated by isomorphism classes of varieties over \(k\), modulo the relations
\[[X]=[X\setminus Z]-[Z]\]
for every closed subvariety \(Z\subset X\).
The ring structure is given by the product 
\[[X]\cdot [Y]:=[X\times_k Y],\]
which is commutative with unit the class of a point \([\Spec(k)]\). 
The class of the affine line 
\[\bbL:=[\mathbb{A}^1_k]\in K_0(\Var_k)\]
is called the \textit{Lefschetz motive}. 
The class of a variety in the Grothendieck ring of varieties is a motivic invariant, it also carries information about the birational geometry of the variety and about its derived category.

It was proved by Larsen and Lunts \cite{LarsenLunts:motivic} that the quotient \(K_0(\Var_k)/\bbL\) is isomorphic to the free abelian group generated by the stably birational classes of smooth projective varieties. Two varieties \(X,Y\) are called \textit{D-equivalent} if there is an equivalence \(D^b(X)\cong D^b(Y)\) of their bounded derived categories of coherent sheaves. As a consequence of results of Kontsevich \cite{Kontsevich:motivic_integration} (see also \cite[4.1]{Veys_motivic_integration}), a conjecture of Kawamata \cite[Conjecture 1.2]{kawamata2002d} predicts that birational D-equivalent smooth projective varieties have equal classes in the completion (with respect to the dimension filtration) of the localization \(K_0(\Var_k)[\bbL^{-1}]\). This is in fact proved to be true by Kawamata when the smooth projective varieties are D-equivalent and of general type \cite[Theorem 1.4 (2)]{kawamata2002d}, in which case they are showed to be automatically birational. It is then interesting to study the localization \(K_0(\Var_k)\to K_0(\Var_k)[\bbL^{-1}]\), with the difficulty that Borisov proved \cite{Borisov:class_of_affine_line} that the class \(\bbL\) is a zero-divisor in \(K_0(\Var_{\bbC})\). This lead Kuznetsov and Shinder to the following definition.
\begin{definition}[{\cite{kuznetsov2018grothendieck}}]
    Two varieties \(X\) and \(Y\) over \(k\) are \textit{L-equivalent} if there exists a positive integer \(r\) such that \[\bbL^{r}\cdot ([X]-[Y])=0\in K_0(\Var_k).\]
\end{definition}
 It was conjectured by Kuznetsov--Shinder \cite[Conjecture 1.6]{kuznetsov2018grothendieck} that simply connected algebraic varieties that are D-equivalent should be also L-equivalent, independently this was also conjectured by Ito--Miura--Okawa--Ueda \cite[Problem 1.2]{ito2020derived} without the assumption of simply connectedness.
By Bondal--Orlov's Reconstruction Theorem \cite[Theorem 2.5]{BondalOrlov}, if two varieties with ample canonical class or anticanonical class are D-equivalent then they are isomorphic. In particular, they are clearly L-equivalent. On the other hand, when the canonical bundle is trivial, this implication is proved to be false in \cite{efimov2018some,ito2020derived,meinsma2025counterexamples} including the simply connected case. Indeed, there exist examples of abelian varieties and projective hyper-Kähler manifolds that are D-equivalent but not L-equivalent. A variant of this conjecture \cite[Problem 7.2]{ito2020derived} is still an open case and it was proved to be true for abelian varieties in \cite{caucci2023derived}. The opposite implication is not entirely clear in general, and Meinsma gives the following conjecture in the case of hyper-Kähler manifolds.
\begin{conjecture}[Meinsma, {\cite[Conjecture 1.5]{meinsma2025counterexamples}}]
   Projective hyper-Kähler manifolds that are L-equivalent must be also D-equivalent.
\end{conjecture}
The results of this paper seek evidence for this conjecture, and aim to formulate a similar conjecture for cubic fourfolds.
\subsection{Results}
We will always assume \(k=\bbC\). The results we present are mainly based on the Hodge-theoretical and lattice-theoretical implications of L-equivalence, which were developed by Efimov \cite{efimov2018some} and Meinsma \cite{meinsma2024equivalence}.

Our first contribution is to determine whether L-equivalent cubic fourfolds need to be isomorphic (which is the same as being D-equivalent).

\begin{theorem}[{\autoref{thm:main_very_general_cubic}}]\label{thm:intro_very_general_cubic}
    Let \(Y\subset \mathbb{P}^5\) be a very general cubic fourfold. If \(Y'\) is a cubic fourfold which is L-equivalent to \(Y\), then \(Y\cong Y'\).
\end{theorem}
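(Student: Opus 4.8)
The plan is to reduce the statement to the global Torelli theorem for cubic fourfolds, using L-equivalence only as a device to produce an isometry of the relevant integral Hodge structures. First I would record the cohomological shape of a smooth cubic fourfold: by the Lefschetz hyperplane theorem $Y$ has the cohomology of $\mathbb{P}^4$ in every degree except the middle one, so the only non-Tate piece lives in $H^4(Y,\mathbb{Z})$, a lattice of rank $23$ with Hodge numbers $(1,21,1)$. The starting point is then the Hodge-theoretic consequence of L-equivalence developed by Efimov and Meinsma: since $Y$ and $Y'$ are L-equivalent and $\bbL$ realizes as the Tate class, which is not a zero-divisor in the Grothendieck ring of integral Hodge structures, one extracts an isomorphism of the transcendental parts of $\bigoplus_k H^k$. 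For cubic fourfolds this collapses to a Hodge isometry $T(Y)\cong T(Y')$ of the transcendental lattices.

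Next I would exploit very-generality to identify these transcendental lattices with full primitive cohomology. For $Y$ very general the lattice of integral $(2,2)$-classes in $H^4(Y,\mathbb{Z})$ is minimal, namely $\mathbb{Z}\,h^2$ with $\langle h^2,h^2\rangle=3$, so that $T(Y)=(h^2)^{\perp}=H^4_{\mathrm{prim}}(Y,\mathbb{Z})$ has rank $22$. The isometry $T(Y)\cong T(Y')$ forces $T(Y')$ to have rank $22$ as well, hence $Y'$ is itself very general and $T(Y')=H^4_{\mathrm{prim}}(Y',\mathbb{Z})$. Since the abstract lattice $H^4(\,\cdot\,,\mathbb{Z})$ together with its distinguished class $h^2$ is the same for every cubic fourfold, and is reconstructed from $T\oplus\mathbb{Z}\,h^2$ by a gluing pinned down by the discriminant data, the Hodge isometry of transcendental lattices extends to a Hodge isometry $H^4(Y,\mathbb{Z})\cong H^4(Y',\mathbb{Z})$ carrying $h^2$ to $(h')^2$. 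After adjusting, if necessary, by an element of the monodromy group so that the map preserves the natural orientation, Voisin's global Torelli theorem for cubic fourfolds yields $Y\cong Y'$.

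The hard part will be upgrading the easy rational statement to the integral, polarized level. Equality of virtual Hodge structures, and hence an isomorphism of \emph{rational} Hodge structures, is immediate from $\bbL$ being a non-zero-divisor in the Grothendieck ring of Hodge structures; the real content is the \emph{integral} Hodge isometry, which is precisely what I would import from Efimov and Meinsma. One then has to verify that the resulting map of the transcendental Hodge structure actually preserves the intersection form: here I expect to use that for very general $Y$ the Hodge structure $T(Y)\otimes\mathbb{Q}$ is irreducible (endomorphism algebra $\mathbb{Q}$), so that any Hodge isomorphism is determined up to a scalar and integrality pins the scalar down to $\pm1$, making the polarization survive. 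The role of the very-generality hypothesis is exactly to make $T(Y)$ coincide with $H^4_{\mathrm{prim}}(Y,\mathbb{Z})$, eliminating the algebraic classes that would otherwise obstruct both the extension of the isometry across $\mathbb{Z}\,h^2$ and the recovery of the polarization. The final delicacy is checking that the constructed isometry lies in the correct monodromy orbit so that Voisin's Torelli theorem applies rather than merely producing an abstract Hodge isometry.
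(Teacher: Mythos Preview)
Your plan matches the paper's proof: pass from L-equivalence to a Hodge isomorphism of transcendental parts via Efimov--Meinsma, use very-generality to get $\End(T(Y))=\bbZ$ and $T(Y)=H^4_{prim}(Y,\bbZ)$, upgrade to a Hodge isometry, extend across $\langle \eta_Y\rangle$, and apply Voisin's Torelli. The one soft spot is the claim that ``integrality pins the scalar down to $\pm1$.'' What Meinsma's result (\autoref{prop:meinsma}) actually gives is a Hodge isometry $T(Y)\cong T(Y')(q)$ for some $q\in\bbQ$, and integrality of both forms does not by itself force $q=\pm1$ (take $b$ and $2b$ on the same $\bbZ$-module); what does is the equality $\disc T(Y)=\disc T(Y')$, which the paper deduces from unimodularity of $H^4(Y,\bbZ)$ together with the fact that L-equivalence preserves the order of the gluing group $H/(A(H)\oplus T(H))$ (\autoref{lem:unimod_Hodge_lattices}, \autoref{lem:disc_cubic}). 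In the very-general situation you can also recover this once you observe, as you do, that $\rk T(Y')=22$ forces $Y'$ very general and hence $\disc T(Y')=3$; either way $q^{22}=1$. After that the signature $(2,20)$ excludes $q=-1$, a step you should make explicit. Your final ``monodromy adjustment'' is slightly off-target: Voisin's Torelli for cubic fourfolds needs only a Hodge isometry $H^4(Y,\bbZ)\cong H^4(Y',\bbZ)$ sending $\eta_Y$ to $\eta_{Y'}$, and the paper obtains this by composing the isometry on $H^4_{prim}$ with $-\id$ if necessary so that the induced maps on discriminant groups glue.
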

We say that a cubic fourfold \(Y\) is \textit{special} if it contains a surface \(S\) which is not homologous to the square of a hyperplane class \(h^2=\eta_Y\in A(Y):= H^4(Y,\bbZ)\cap H^{2,2}(Y)\), i.e. the \textit{algebraic lattice} \(A(Y)\) has rank at least \(2\). Special cubic fourfolds belong to \textit{Hassett divisors} \(\mathcal{C}_d\subset \mathcal{C}\) in the moduli space of cubic fourfolds with \(d\equiv 0,2 \mod 6\) and \(d>6\), see \cite{hassett2000special}. We say that a cubic fourfold is \emph{very general} if it does not belong to any Hassett divisor, i.e. it is not special. We say that a cubic fourfold is \emph{very general in the Hassett divisor \(\mathcal{C}_d\)} if it belongs to the Hassett divisor \(\mathcal{C}_d\) but does not belong to any other divisor \(\mathcal{C}_{d'}\) with \(d'\not= d\), such a cubic fourfold is in particular special.

We observe that \autoref{thm:intro_very_general_cubic} might be not true when considering a cubic fourfold which is not very general. 
\begin{remark}[{\autoref{prop:counterexamples}}]\label{remark: counterexamples}
  There exist special cubic fourfolds \(Y\) and \(Y'\) which are L-equivalent, but not isomorphic (and in particular not D-equivalent).\footnote{Similar general results about L-equivalent cubic fourfolds are independently achieved by Meinsma and Moschetti in \cite{meinsma2026lequivalencefouriermukaipartnerscubic}, where further examples of this phenomenon are studied.}
\end{remark}
The derived category of a cubic fourfold $Y$ admits a semi-orthogonal decomposition \cite{kuznetsov2009derived}:
\[
D^b(Y)=\langle \mathcal{A}_Y, \mathcal{O}_Y, \mathcal{O}_Y(1), \mathcal{O}_Y(2) \rangle,
\]
where the subcategory
\[
\mathcal{A}_Y:=\langle \mathcal{O}_Y, \mathcal{O}_Y(1), \mathcal{O}_Y(2) \rangle^\perp \subset D^b(Y)
\]
is called the \emph{Kuznetsov component}. The Bondal--Orlov reconstruction theorem implies that every derived equivalence between cubic fourfolds gives an isomorphism of the cubic fourfolds. Being D-equivalent turns out then to be a very strong condition in this case, it is therefore natural to consider a weaker notion of equivalence. Following  \cite{huybrechts2017k3}, we say that two cubic fourfolds are \emph{Fourier--Mukai (FM) partners} if there is a Fourier--Mukai equivalence
\(
\mathcal{A}_Y \cong \mathcal{A}_{Y'}
\)
between the corresponding Kuznetsov components. Such an equivalence does not necessarily extend to an equivalence \(D(Y)\cong D(Y')\).

The examples of \autoref{remark: counterexamples} are cubic fourfolds $Y,Y' \in \mathcal{C}_{20}$ studied by Fan and Lai \cite{Fan_Lai:New_rational}. They are birational to each other and, moreover, they are also Fourier--Mukai partners. \footnote{A similar situation is also expected in the case of cubic fourfolds containing pairs of non-syzygetic cubic scrolls, as in \cite{brooke2024birational}.}

\begin{theorem}[{\autoref{thm:FM_equiv}}]\label{thm:main_FM_equiv}
    Consider a very general cubic fourfold \(Y\in \mathcal{C}_d\) with \(d\equiv 0,2 \mod 6\) and \(d>6\) not divisible by \(9\). If \(Y'\) is a cubic fourfold which is L-equivalent to \(Y\), then \(Y\) and \(Y'\) are FM partners.
\end{theorem}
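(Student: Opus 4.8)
The plan is to read off from L-equivalence a Hodge isometry of transcendental lattices, to transport it to the Mukai lattice of the Kuznetsov component, and then to extend it to the whole Mukai lattice so that the derived Torelli theorem for cubic fourfolds detects Fourier--Mukai partnership; the arithmetic hypotheses on \(d\) will be used precisely in the extension step. I would begin by recording what L-equivalence provides. By the work of Efimov \cite{efimov2018some} and Meinsma \cite{meinsma2024equivalence}, an L-equivalence between \(Y\) and \(Y'\) yields a Hodge isometry of transcendental lattices \(T(Y)\cong T(Y')\): all of the cohomology of a cubic fourfold apart from the transcendental part of \(H^4\) is of Tate type and hence contributes only Lefschetz classes, which are annihilated upon inverting \(\bbL\). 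Since \(Y\) is very general in \(\mathcal{C}_d\), its algebraic lattice \(A(Y)=K_d\) has rank two and discriminant \(d\) \cite{hassett2000special}, so \(T(Y)\) has rank \(21\) and signature \((19,2)\). The Hodge isometry forces \(A(Y')\) to have the same rank and discriminant, so that \(Y'\) is again a generic member of \(\mathcal{C}_d\), lying in no further Hassett divisor.

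Next I would pass to the Kuznetsov components \cite{kuznetsov2009derived}. Recall that \(\widetilde{H}(\mathcal{A}_Y,\bbZ)\) is an even unimodular lattice of signature \((4,20)\) carrying a weight-two Hodge structure of K3 type, whose transcendental part \(\widetilde{T}(\mathcal{A}_Y)\) coincides, up to a Tate twist, with \(T(Y)\), and whose algebraic orthogonal complement \(\widetilde{A}(\mathcal{A}_Y)\) has rank three and signature \((2,1)\), determined by \(d\) (for a generic cubic it is the \(A_2\)-lattice spanned by the two canonical classes of \(\mathcal{A}_Y\), and for \(Y\in\mathcal{C}_d\) it gains one further algebraic class). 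The previous step thus upgrades to a Hodge isometry \(\widetilde{T}(\mathcal{A}_Y)\cong\widetilde{T}(\mathcal{A}_{Y'})\). By the derived Torelli theorem for cubic fourfolds (Bayer--Lahoz--Macr\`i--Nuer--Perry--Stellari, after Addington--Thomas), it then suffices to promote this to an orientation-preserving Hodge isometry \(\widetilde{H}(\mathcal{A}_Y,\bbZ)\cong\widetilde{H}(\mathcal{A}_{Y'},\bbZ)\) of the full Mukai lattices, for such an isometry is induced by a Fourier--Mukai equivalence \(\mathcal{A}_Y\cong\mathcal{A}_{Y'}\).

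The core of the argument is therefore the extension, and here the hypotheses enter. Since \(\widetilde{H}(\mathcal{A}_Y,\bbZ)\) is unimodular, Nikulin's gluing theory reduces extendability of \(\widetilde{T}(\mathcal{A}_Y)\cong\widetilde{T}(\mathcal{A}_{Y'})\) to matching the complementary lattices, which lie in a common genus because \(q_{\widetilde{A}}=-q_{\widetilde{T}}\). I would show that \(d\equiv 0,2\pmod 6\) together with \(9\nmid d\) force the discriminant group \(A_{\widetilde{A}}\) to be cyclic, so that \(\ell(A_{\widetilde{A}})\le 1\); the inequality \(\operatorname{rank}\widetilde{A}=3\ge \ell(A_{\widetilde{A}})+2\) and the indefiniteness of \(\widetilde{A}\) then place us in the range of the theorems of Eichler and Nikulin, whence the genus of \(\widetilde{A}\) is a single isometry class (so \(\widetilde{A}(\mathcal{A}_Y)\cong\widetilde{A}(\mathcal{A}_{Y'})\)) and \(O(\widetilde{A})\to O(q_{\widetilde{A}})\) is surjective. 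These two facts extend the transcendental isometry to the Mukai lattice, and the orientation can be corrected by composing with an isometry of the indefinite lattice \(\widetilde{A}\) acting trivially on \(q_{\widetilde{A}}\); derived Torelli then gives the Fourier--Mukai partnership.

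The step I expect to be the main obstacle is the discriminant computation bounding \(\ell(A_{\widetilde{A}})\): one must determine the discriminant form of the rank-three algebraic Mukai lattice of a generic \(Y\in\mathcal{C}_d\) from Hassett's lattice \(K_d\) and verify that \(9\nmid d\) is exactly what rules out a \((\bbZ/3)^{2}\) summand arising from the interaction of the \(\bbZ/3\) of the \(A_2\)-part with the \(3\)-part of \(d\). Were cyclicity to fail, the genus of \(\widetilde{A}\) could split into several classes and the transcendental isometry might not extend, which is why the conclusion is stated only under these congruence conditions; note in particular that the pairs in \(\mathcal{C}_{20}\) of \cite{Fan_Lai:New_rational}, with \(20\equiv 2\pmod 6\) and \(9\nmid 20\), fall within the scope of the theorem and are indeed Fourier--Mukai partners.
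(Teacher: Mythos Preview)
Your overall strategy matches the paper's: obtain a Hodge isometry $T(Y)\cong T(Y')$, transport it to the Kuznetsov Mukai lattice, use cyclicity of the discriminant group (from $9\nmid d$) together with Nikulin's results to extend to a full Hodge isometry $\widetilde{H}(\mathcal{A}_Y,\bbZ)\cong\widetilde{H}(\mathcal{A}_{Y'},\bbZ)$, and conclude via derived Torelli (the paper cites \cite{huybrechts2017k3} rather than BLMNPS).

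There is, however, a genuine gap in your first step. You assert that Efimov--Meinsma directly yield a Hodge \emph{isometry} $T(Y)\cong T(Y')$, but what these results actually give is weaker. From L-equivalence one has $[H^4(Y,\bbZ)]=[H^4(Y',\bbZ)]$ in $K_0(\HS_\bbZ)$ and hence $[T(Y)]=[T(Y')]$; Efimov's theorem then upgrades this to an isomorphism of Hodge structures, but only under the hypothesis $\End(T(Y))=\bbZ$, which you never state or verify (it does hold for $Y$ very general in $\mathcal{C}_d$, by \autoref{rmk:simple_end_algebra}). Meinsma's result (\autoref{prop:meinsma}) then produces a Hodge isometry $T(Y)\cong T(Y')(q)$ for some $q\in\bbQ$, and one must still pin down $q=1$. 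Your explanation that the Tate classes are ``annihilated upon inverting $\bbL$'' is not the mechanism at work and in any case does not address this scaling ambiguity. The paper's argument (\autoref{lem:disc_cubic} and \autoref{lem:Hodge_isometry_cubic}) uses that $H^4(Y,\bbZ)$ is unimodular, so the gluing subgroup satisfies $|G(H^4(Y,\bbZ))|=\disc T(Y)$; since $\mathcal{G}([H])=|G(H)|$ is invariant under equality of classes in $K_0(\HS_\bbZ)$, one gets $\disc T(Y)=\disc T(Y')$, forcing $q=\pm1$, and the signature of $T(Y)$ (with $\rk T(Y)=21\neq 4$) then excludes $q=-1$.

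Once that is filled in, the remainder of your outline coincides with the paper's proof: the paper cites \cite[Lemma 2.4]{Fan_Lai:New_rational} for the cyclicity of $D(T(Y))$ when $9\nmid d$, and then applies \autoref{lem:extension_to_Hodge_isom2} (precisely the Nikulin extension argument you describe) to the rank-$3$ indefinite even lattice $A(\mathcal{A}_Y)$. The paper does not explicitly discuss orientation, so that part of your argument is an addition rather than an omission.
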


We then wonder if this is always true and pose the following. 

\begin{conjecture}
\label{thm:main_special_cubic}
     If \(Y\) and \(Y'\) are L-equivalent cubic fourfolds then they are FM partners.
\end{conjecture}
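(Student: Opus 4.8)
The plan is to pass through Hodge theory: deduce from L-equivalence a Hodge isometry of the K3-type Hodge structures of the two cubics, promote it to a Hodge isometry of the Mukai lattices of the Kuznetsov components, and then invoke a derived global Torelli theorem to realize this isometry by an actual Fourier--Mukai equivalence $\mathcal{A}_Y\cong\mathcal{A}_{Y'}$. The first step is the same input used for \autoref{thm:intro_very_general_cubic} and \autoref{thm:main_FM_equiv}, and it is important that it needs no genericity assumption: the lattice-theoretic consequences of L-equivalence established by Efimov and Meinsma turn a relation $\bbL^{r}\cdot([Y]-[Y'])=0$ into a Hodge isometry $T(Y)\cong T(Y')$ of the transcendental lattices $T(Y):=A(Y)^{\perp}$, for $A(Y)$ of arbitrary rank and $d$ arbitrary. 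Hence the whole difficulty of the conjecture, beyond \autoref{thm:main_FM_equiv}, lies in converting this transcendental datum into an equivalence of categories when $d$ is unrestricted.

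For the second step recall that the transcendental part of the Mukai Hodge structure $\widetilde{H}(\mathcal{A}_Y,\bbZ)$ is canonically $T(Y)$, and that $\widetilde{H}(\mathcal{A}_Y,\bbZ)$ is abstractly the unimodular K3 Mukai lattice $U^{\oplus 4}\oplus E_8(-1)^{\oplus 2}$ of signature $(4,20)$. One would like to extend the isometry $T(Y)\cong T(Y')$ to a Hodge isometry $\widetilde{H}(\mathcal{A}_Y,\bbZ)\cong\widetilde{H}(\mathcal{A}_{Y'},\bbZ)$, and then to correct it to an orientation-preserving one using reflections in the algebraic part. By Nikulin's theory such an extension into a unimodular ambient lattice is automatic once the transcendental lattice is small enough relative to its discriminant form, and here is exactly where the numerical hypothesis of \autoref{thm:main_FM_equiv} enters: when $9\nmid d$ the discriminant form of $T(Y)$ stays within the range where the primitive embedding is unique, whereas for $9\mid d$ the $3$-adic part of the discriminant can grow and the uniqueness criterion fails, so that $T(Y)\cong T(Y')$ need no longer extend by soft lattice theory alone. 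Treating these degenerate discriminant forms -- either by a finer case analysis or by showing that L-equivalence in fact supplies the compatible gluing data -- is the first genuine obstacle.

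The second and deeper obstacle is the derived global Torelli theorem itself. For $9\nmid d$ the Kuznetsov component admits a (possibly twisted) K3 companion $D^{b}(S,\alpha)$, and the isometry produced above can be realized geometrically through Orlov's derived Torelli theorem for (twisted) K3 surfaces; this is the mechanism behind \autoref{thm:main_FM_equiv}. When $9\mid d$ no such K3 companion need exist, and one must argue intrinsically inside $\mathcal{A}_Y$: the available technology is the theory of Bridgeland stability conditions on Kuznetsov components together with the moduli spaces of stable objects they support, from which one would build the Fourier--Mukai kernel as a (twisted) universal family over a moduli space of stable objects in $\mathcal{A}_Y$ with Mukai vector prescribed by the isometry, and then check that the induced functor is an equivalence acting as the given isometry on $\widetilde{H}(\mathcal{A}_Y,\bbZ)$. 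Establishing this unconditionally -- in particular the nonemptiness and smoothness of the relevant moduli spaces and the orientation bookkeeping needed to get a genuine equivalence rather than a bare lattice isometry -- is a derived Torelli theorem for Kuznetsov components that is not available in general, and it is this gap, together with the lattice-extension subtlety above, that keeps the statement at the level of a conjecture.
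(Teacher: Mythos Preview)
The statement is a \emph{conjecture}: the paper offers no proof, only the evidence of \autoref{thm:FM_equiv} and the examples in \autoref{prop:counterexamples}. So there is nothing to compare your proposal against, and the appropriate output is an honest assessment of where the obstructions lie. Your write-up does this, and correctly ends by acknowledging the statement remains open, but it misidentifies the obstacles.

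First, your claim that the passage from L-equivalence to a Hodge \emph{isometry} $T(Y)\cong T(Y')$ ``needs no genericity assumption'' is incorrect. What L-equivalence gives unconditionally is $[T(Y)]=[T(Y')]$ in $K_0(\HS_{\bbZ})$ together with $\disc T(Y)=\disc T(Y')$ (\autoref{lem:disc_cubic}). To upgrade this to a Hodge isometry one invokes \autoref{prop:meinsma} via \autoref{lem:anti_isom}, and both of these require $\End(T(Y))=\bbZ$; this is exactly the hypothesis in \autoref{lem:Hodge_isometry_cubic}. For arbitrary special cubics the endomorphism algebra can be a CM or totally real field, and then $[T(Y)]=[T(Y')]$ need not even force a Hodge isomorphism, let alone an isometry. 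So dropping genericity is already a genuine problem at step one, not something that comes for free.

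Second, the ``derived global Torelli theorem for Kuznetsov components'' that you flag as the deeper missing ingredient is in fact available and is precisely what the paper uses: by \cite[Theorem 1.5 iii)]{huybrechts2017k3}, two cubic fourfolds are FM partners if and only if there is a Hodge isometry $\widetilde{H}(\mathcal{A}_Y,\bbZ)\cong\widetilde{H}(\mathcal{A}_{Y'},\bbZ)$. The proof of \autoref{thm:FM_equiv} does not pass through an associated (twisted) K3 surface as you suggest; it applies Huybrechts' criterion directly. Consequently the genuine remaining difficulties for the conjecture are (i) removing the hypothesis $\End(T(Y))=\bbZ$ to obtain $T(Y)\cong T(Y')$, and (ii) extending this isometry to the Mukai lattice when $\rk A(Y)$ is large or the discriminant form is not cyclic (the $9\mid d$ phenomenon you do identify, but now for arbitrary Picard rank). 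The categorical side is not the bottleneck.
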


We now turn to the case of hyper-Kähler manifolds and provide additional evidence, in certain cases, for Meinsma's conjecture \cite[Conjecture 1.5]{meinsma2025counterexamples}, which predicts that L-equivalence implies D-equivalence for projective hyper-Kähler manifolds. We also recall that birational hyper-Kähler manifolds are conjectured to be D-equivalent \cite{bondal1995semiorthogonal,kawamata2002d}, and this is known to hold for manifolds of $K3^{[n]}$ type \cite{maulik2025d}. If \(X\) is hyper-Kähler, we consider the \textit{transcendental lattice} \(T(X)\subset H^2(X,\bbZ)\), the smallest primitive sublattice such that \(H^{2,0}(X)\subset T(X)\otimes \bbC\), and the \textit{Néron--Severi lattice} \(\NS(X)\subset H^2(X,\bbZ)\) which can be seen as the orthogonal to \(T(X)\) with respect to the Beauville--Bogomolov--Fujiki form, and it consists of algebraic classes. 
\begin{theorem}[{\autoref{prop:LSV}, \autoref{prop:LEquivalenceKuznetsov}, \autoref{cor:Kuz_special_cubics}, \autoref{prop:OG10_moduli_spaces}, \autoref{prop:hillbert_scheme}}]
    Let \(X\) and \(X'\) be projective hyper-Kähler manifolds which are deformation equivalent and L-equivalent. Assume that \(\End(T(X))=\bbZ\), then:
    \begin{enumerate}
        \item If \(X\) and \(X'\) are birational to the (twisted) LSV manifolds associated to very general cubic fourfolds, then \(X\) and \(X'\) are birational.
        
        \item If $X$ and $X'$ are moduli spaces on the Kuznetsov component of very general cubic fourfolds $Y$ and $Y'$ with Mukai vector of the same square, then $Y\simeq Y'$ and $X$ is birational to $X'$. 
        In case \(X\) and \(X'\) are of K3\(^{[n]}\) type, they are also D-equivalent.
        \item If \(X\) is of OG10 type, \(\NS(X)=U\oplus A_2(-1)\) and \(\disc T(X)=1\), then \(X\) and \(X'\) are birational and D-equivalent.
        \item If \(X\) is birational to \(S^{[n]}\) for a K3 surface \(S\) with \(U\subseteq \NS(S)\) and \(\rk T(S)\not=4\), then in case \(\disc T(X)=\disc T(X')=m\) is coprime to \(2(n-1)\) we have that \(X\) and \(X'\) are D-equivalent. 
    \end{enumerate}
\end{theorem}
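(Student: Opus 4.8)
The plan is to run all four parts through a single mechanism and then feed its output into the Torelli theorem appropriate to each geometry. The common input is the lattice-theoretic consequence of L-equivalence established by Efimov \cite{efimov2018some} and Meinsma \cite{meinsma2024equivalence}: for deformation-equivalent hyper-K\"ahler manifolds, L-equivalence forces an isomorphism of rational transcendental Hodge structures, and in fact a Hodge isometry $\phi\colon T(X)_{\mathbb{Q}}\xrightarrow{\sim}T(X')_{\mathbb{Q}}$. The hypothesis $\End(T(X))=\bbZ$ is then used to rigidify $\phi$: since the Hodge endomorphism algebra of $T(X)$ is $\mathbb{Q}$, any two such isometries differ by a scalar, so $\phi$ is unique up to $\mathbb{Q}^{\times}$, and being form-preserving pins it down up to sign. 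The only remaining, case-dependent, task is to make $\phi$ integral on the relevant lattice and to realise it geometrically; this is precisely where the discriminant and coprimality hypotheses enter.

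For parts (1) and (2) I would observe that $T(X)$ is Hodge-isometric to the primitive transcendental cohomology $T(Y)$ of an associated cubic fourfold $Y$: for the (twisted) LSV manifolds this is the period relation of Laza--Sacc\`a--Voisin, and for moduli spaces on the Kuznetsov component it is the natural Hodge isometry between the transcendental part of the Mukai lattice of $\mathcal{A}_Y$ and $T(Y)$, the equal-square hypothesis guaranteeing that $X$ and $X'$ are governed by the same abstract lattice datum. Thus $\phi$ descends to a Hodge isometry $T(Y)\cong T(Y')$; as the cubics are very general, $A(Y)=\bbZ\,\eta_Y$ has rank one and $T(Y)=H^4(Y,\bbZ)_{\mathrm{prim}}$, so the argument of \autoref{thm:intro_very_general_cubic} (Torelli for cubic fourfolds) gives $Y\simeq Y'$. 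Once the cubics coincide, $X$ and $X'$ are birational: through the LSV construction in (1), and through Bayer--Macr\`i wall-crossing between moduli spaces of objects in $\mathcal{A}_Y$ with Mukai vectors of equal square in (2). When $X,X'$ are of K3$^{[n]}$ type, birationality is promoted to D-equivalence by Maulik's theorem \cite{maulik2025d}.

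For parts (3) and (4) the heart of the matter is to extend $\phi$ to an integral Hodge isometry of the whole lattice $H^2$ and to realise it through the global Torelli theorem of Verbitsky and Markman. Here the arithmetic hypotheses are decisive. In (3) the condition $\disc T(X)=1$ makes $T(X)$ unimodular, hence an orthogonal direct summand and automatically $\phi$-integral; with $\NS(X)=U\oplus A_2(-1)$ fixed, the complements are abstractly isometric and $\phi$ extends across $\NS$ with no gluing obstruction, yielding a Hodge isometry $H^2(X)\cong H^2(X')$ which, after correction by the monodromy group so as to preserve orientation and a K\"ahler chamber, produces a birational map. In (4), for $X$ birational to $S^{[n]}$ one has $H^2=H^2(S)\oplus\bbZ\delta$ with $\delta^2=-2(n-1)$ and $T(X)=T(S)$; the coprimality of $m=\disc T(X)$ with $2(n-1)$ is exactly what forces the a priori rational isometry to restrict to an integral Hodge isometry $T(S)\cong T(S')$, the discriminant contributions of coprime order splitting off. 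By Orlov's derived Torelli theorem for K3 surfaces $S$ and $S'$ are then derived equivalent, whence so are $S^{[n]}$ and $S'^{[n]}$, and since $X,X'$ are birational to these a further application of Maulik's theorem \cite{maulik2025d} gives $D^b(X)\cong D^b(X')$.

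The step I expect to be the main obstacle is the passage in (3) and (4) from the abstract transcendental isometry to a geometric conclusion. For the cubic cases this is mild, everything reducing to Torelli for cubic fourfolds, but in the hyper-K\"ahler cases one must upgrade $\phi$ to a genuinely integral isometry of the full lattice and certify that the result lies in the monodromy group and preserves orientation; the hypotheses $\End(T(X))=\bbZ$, the unimodularity $\disc T(X)=1$, and the coprimality of $m$ with $2(n-1)$ are exactly the levers that remove the discriminant-form and orientation obstructions, while the exclusion $\rk T(S)\neq 4$ avoids the degenerate range in which the endomorphism analysis no longer forces descent to the K3 level. A further subtlety specific to (3) is that, unlike the K3$^{[n]}$ case covered by Maulik, the OG10 type has no general ``birational implies D-equivalent'' theorem at our disposal, so the promotion of the birational map to a derived equivalence must be carried out by hand for these particular manifolds.
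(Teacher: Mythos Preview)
Your overall architecture is right, but three of the four case-specific arguments have genuine gaps.

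In part (2), ``Bayer--Macr\`i wall-crossing'' does not do what you need: wall-crossing relates moduli spaces for the \emph{same} Mukai vector and different stability conditions, whereas here $v$ and $w$ are merely of equal square. The paper instead proves a small lattice lemma: two primitive vectors of the same square in $A_2$ have isometric orthogonal complements, and since $O(A_2)\to O(D(A_2))$ is surjective this upgrades the transcendental isometry to a Hodge isometry $v^{\perp}\cong w^{\perp}$ in the full Mukai lattice; birationality then comes from Markman's monodromy criterion, not from wall-crossing. In part (3), ``by hand'' is not an argument: the actual mechanism is that the hypotheses $\NS(X)=U\oplus A_2(-1)$ and $\disc T(X)=1$ force (via \cite{FGG:OG10_as_moduli}) that $X$ is birational to a singular moduli space of sheaves on a K3 surface, and D-equivalence for birational OG10 manifolds of this specific kind is supplied by Halpern-Leistner \cite{halpern2020derived}. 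Without that input there is no general ``birational $\Rightarrow$ D-equivalent'' statement for OG10.

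Part (4) has the most serious problem. Your route via Orlov's derived Torelli presupposes that $X'$ is birational to $S'^{[n]}$ for some K3 surface $S'$, but nothing in the hypotheses gives this; $X'$ is only assumed to be of K3$^{[n]}$ type. The paper avoids this by working entirely on $H^2$: first, since $\disc T(X)=\disc T(X')$ is already a hypothesis, integrality of the transcendental isometry is immediate (the coprimality of $m$ with $2(n-1)$ is \emph{not} used here). The coprimality is used for a different purpose: it forces the gluing subgroup of $T(X')\subset H^2(X',\bbZ)$ to be trivial, hence $D(\NS(X'))\cong D(T(X'))(-1)\oplus D([-2(n-1)])$, which matches $D(\NS(X))$. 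With $U\subset\NS(X)$ giving $\rk\NS\geq l(D(\NS))+2$, the transcendental isometry then extends to all of $H^2$, Markman's criterion gives birationality, and Maulik finishes. Your misattribution of the role of coprimality would leave you unable to compare $\NS(X)$ with $\NS(X')$ and the extension step would fail.
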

Here \(\disc T(X)\) denotes the discriminant of \(T(X)\) as a lattice and \(\End(T(X))\) denotes the algebra of Hodge endomorphisms. We remark that the hypothesis \(\End(T(X))=\bbZ\) is satisfied by a general element of a lattice-polarized family, see \autoref{rmk:simple_end_algebra}.

\subsection*{Outline}

We collect in \autoref{sec:Hodge lattices} preliminaries and known results about lattices, Hodge structures, and L-equivalence. In \autoref{sec:cubic fourfolds} we present our results about cubic fourfolds. The subsections of \autoref{sec:hyper-Kähler} are dedicated to various constructions of hyper-Kähler manifolds involving cubic fourfolds and K3 surfaces. 

    \subsection*{Acknowledgements}
We are thankful to Lisa Marquand for pointing out a mistake in the previous version and for useful comments, also to Dominique Mattei for bringing the work \cite{maulik2025d} to our attention. 

The discussions about cubic fourfolds and the collaboration started at the IX Japanese-European Symposium on Symplectic Varieties and Moduli Spaces. We would like to thank the organizers for the nice environment of collaboration offered during the conference.

Simone Billi would like to thank Matthias Schütt and the Riemann Center for Geometry and Physics of Hannover for hosting and supporting him during the writing of this article.

\section{Hodge lattices and L-equivalence}\label{sec:Hodge lattices}

\subsection{Hodge lattices}
A \textit{lattice} is a finitely generated free abelian group \(L\) with a non-degenerate symmetric bilinear form \(b\colon L\times L\to \bbZ\). For elements \(v,w\in L\) we will write \(v^2:=b(v,v)\) and \(v\cdot w:=b(v,w)\). A lattice \(L\) is called \textit{even} if for any \(v\in L\) we have \(v^2\in 2\bbZ\), it is called \textit{odd} otherwise. We denote by \(O(L)\) the group of \textit{isometries}, i.e. isomorphisms preserving the bilinear form. We consider the dual lattice \(L^\vee:=\Hom(L,\bbZ)\cong L\otimes_\bbZ\bbQ\), which naturally contains \(L\), and let \[D(L):=L^\vee/L\] be the \textit{discriminant group}. It is a torsion group endowed with a symmetric bilinear form, which takes values in \(\bbQ/2\bbZ\) when \(L\) is even. We moreover let 
\[\disc(L):=|D(L)|\in \bbZ\]
be the \textit{discriminant} of \(L\), which is also equal to the absolute value of the determinant of a Gram matrix for \(b\). The length of the discriminant group \(l(D(L))\) is the smaller number of generators for \(D(L)\), it does not exceed the rank of \(L\). A lattice \(L\) is called \textit{unimodular} if \(\disc (L)=1\), or equivalently \(L=L^\vee\). 
The \textit{genus} of a lattice is the data of its signature, parity and discriminant form. The genus is clearly an isometry invariant, but there are non-isometric lattices with the same genus.

For an integer \(n\in \bbZ\) we denote by \(L(n)\) the lattice obtained by \(L\) multiplying the bilinear form by \(n\). We can allow also rational numbers \(q\in \bbQ\) and consider \(L(q)\), whose bilinear form takes rational values a priori, and it is a lattice if its bilinear form takes integer values. 

An injective morphism of lattices \(L\hookrightarrow M\) is called an \textit{embedding}, in this case we can see \(L\subset M\) as a sublattice of \(M\) and denote by \(L^\perp\subset M\) its orthogonal complement. An embedding of lattices \(L\subset M\) is called \textit{primitive} if the quotient \(M/L\) is torsion-free, it is called of \textit{finite index} if the quotient \(M/L\) is finite and in this case \(M\) is called an \textit{overlattice} of \(L\).

Following \cite{Nikulin_integral}, a primitive embedding \(L\subset M\) determines a finite index embedding \(L\oplus L^\perp \subseteq M\) and the group
\[G_{L,M}=\frac{M}{L\oplus L^\perp}\]
is embedded isometrically in \(D(L)\) and anti-isometrically in \(D({L^\perp})\). The group \(G_{L,M}\) is called the \textit{gluing subgroup}. It follows that 
\begin{equation}\label{eq:order_glueing}
    |G_{L,M}|^2=\frac{\disc(L)\cdot \disc(L^\perp)}{\disc(M)}.
\end{equation}

\begin{lemma}\label{lem:lenght_discriminant}
    Let \(L\subseteq \Lambda\) be a primitive sublattice of a unimodular lattice \(\Lambda\). Let \(T\) be a lattice and assume there exist \(n\in \bbZ\setminus \{0\}\) such that \(T(n)\) admits a primitive embedding in \(L\). If \(2\rk T >\rk \Lambda \) then \(n=\pm 1\).
\end{lemma}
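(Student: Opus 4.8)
The plan is to derive a contradiction from the assumption $|n|\ge 2$ by comparing two bounds on the length $l(D(T(n)))$ of the discriminant group of $T(n)$: a lower bound coming from the rescaling of the form by $n$, and an upper bound coming from the primitive embedding into the unimodular lattice $\Lambda$. Set $r:=\rk T=\rk T(n)$ and $\rho:=\rk\Lambda$, so the hypothesis reads $2r>\rho$.

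First I would note that a composite of primitive embeddings is primitive: since $T(n)\hookrightarrow L$ and $L\hookrightarrow\Lambda$ are primitive, so is $T(n)\hookrightarrow\Lambda$ (the quotient $\Lambda/T(n)$ is an extension of the torsion-free groups $\Lambda/L$ and $L/T(n)$). Because $\Lambda$ is unimodular, \eqref{eq:order_glueing} gives $|G_{T(n),\Lambda}|^2=\disc(T(n))\cdot\disc(T(n)^\perp)$; combined with the isometric injection $G_{T(n),\Lambda}\hookrightarrow D(T(n))$ and the anti-isometric injection $G_{T(n),\Lambda}\hookrightarrow D(T(n)^\perp)$, which force $|G_{T(n),\Lambda}|\le\disc(T(n))$ and $|G_{T(n),\Lambda}|\le\disc(T(n)^\perp)$, equality propagates and both injections are isomorphisms. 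Hence $D(T(n))\cong D(T(n)^\perp)$ as groups, and therefore
\[ l(D(T(n)))=l\bigl(D(T(n)^\perp)\bigr)\le \rk\bigl(T(n)^\perp\bigr)=\rho-r. \]

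Next I would compute $D(T(n))$ explicitly inside $T\otimes\bbQ$. Rescaling the form by $n$ rescales the dual lattice, $T(n)^\vee=\tfrac1n T^\vee$, while the underlying group of $T(n)$ is just $T$; thus $D(T(n))=\tfrac1n T^\vee/T$. Since $T\subseteq T^\vee$, this group contains $\tfrac1n T/T\cong T/nT\cong(\bbZ/n)^r$. If $|n|\ge 2$, pick a prime $p\mid n$: then $(\bbZ/n)^r[p]\cong(\bbZ/p)^r$, so $l\bigl((\bbZ/n)^r\bigr)=r$. Using that the minimal number of generators of a finite abelian group $G$ equals $\max_p\dim_{\mathbb{F}_p}G[p]$ and is therefore monotone under passing to subgroups, we obtain $l(D(T(n)))\ge r$.

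Combining the two estimates yields $r\le\rho-r$, i.e. $2\rk T\le\rk\Lambda$, contradicting the hypothesis $2\rk T>\rk\Lambda$; hence $|n|=1$, that is $n=\pm1$. The main point to get right is the identification $D(T(n))=\tfrac1n T^\vee/T$ together with the monotonicity of $l(\cdot)$ under subgroups, which supplies the lower bound; unimodularity of $\Lambda$ enters only to upgrade the a priori injection $G_{T(n),\Lambda}\hookrightarrow D(T(n))$ to the isomorphism $D(T(n))\cong D(T(n)^\perp)$ behind the upper bound. I remark that the argument uses only the primitivity of the embedding $T(n)\hookrightarrow L$ and not the primitivity of $T$ itself in $L$.
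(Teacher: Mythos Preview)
Your proof is correct and follows essentially the same route as the paper's: assume $|n|\ge2$, exhibit $(\bbZ/n)^{\rk T}$ inside $D(T(n))$, use unimodularity of $\Lambda$ to identify $D(T(n))\cong D(T(n)^\perp)$ and hence bound $l(D(T(n)))$ by $\rk\Lambda-\rk T$, and derive the contradiction $2\rk T\le\rk\Lambda$. Your write-up is simply more explicit about the auxiliary steps (primitivity of the composite embedding, the formula $l(G)=\max_p\dim_{\mathbb{F}_p}G[p]$ ensuring monotonicity of $l$ under subgroups), and your final remark that primitivity of $T\subset L$ is not actually needed is accurate.
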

\begin{proof}
    Assume by contradiction that \(n\not =\pm 1\). Then \(D({T(n)})\) contains a subgroup isomorphic to \((\bbZ/n\bbZ)^{\rk T}\), generated by an integral basis of \(T(n)\) multiplied by \(\frac{1}{n}\). Since \(\Lambda\) is unimodular, from (\ref{eq:order_glueing}) we have that the primitive embedding \(T(n)\subset \Lambda\) induces an isomorphism \(D({T(n)})\cong D(N)\), where \(N=T(n)^\perp\subset \Lambda\). This means that \(\rk T\leq \rk N=\rk \Lambda- \rk T\) which implies \(2\rk T\leq \rk \Lambda\), a contradiction. 
\end{proof}

\begin{definition}
    \begin{enumerate}
        
        \item[1)] A polarized torsion-free integral Hodge structure is called a \textit{Hodge lattice}. 
        \item[2)] A Hodge structure \(H\) of weight \(2n\) is called of \textit{K3 type} if it is torsion free, \(\dim_\bbC H^{n+1,n-1}=1\) and \(H^{n+i,n-i}=0\) for \(i=2,\dots, n\). 
        \item[3)]
        A Hodge lattice of K3 type is called \textit{irreducible} if it admits no primitive proper Hodge sublattice of K3 type.
    \end{enumerate}
\end{definition}

The polarization of a torsion-free Hodge structure gives a lattice structure which is compatible with the Hodge structure. In particular, a polarized Hodge structure of K3 type has the structure of a Hodge lattice. The main example of irreducible Hodge lattice will be the transcendental part of a Hodge structure of K3 type.
For a Hodge lattice \(H\), we denote by \(\End(H)\) the algebra of Hodge endomorphisms, by \(\Aut(H)\) the group of Hodge automorphisms and by \(O_{Hdg}(H)\) the group of Hodge isometries.

The following very useful and technical result about Hodge lattices will be fundamental.

\begin{proposition}[{\cite[Proposition 1.1 and Lemma 3.1]{meinsma2024equivalence}}]\label{prop:meinsma}
    Let \(T\) and \(T'\) be irreducible Hodge lattices of K3 type. Assume that \(\End(T)=\bbZ\) and that there is an isomorphism of Hodge structures \(T\cong T'\). Then there exists \(q\in \bbQ\) such that \(T'(q)\) is integral and there is a Hodge isometry \(T\cong T'(q)\).
\end{proposition}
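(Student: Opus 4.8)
The plan is to transport the polarization of \(T'\) back onto \(T\) through the given Hodge isomorphism, and then to exploit the rigidity forced by \(\End(T)=\bbZ\): any two Hodge-compatible bilinear forms on \(T\) must be proportional, and the proportionality constant will be the desired scalar.

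First I would fix an isomorphism of Hodge structures \(\phi\colon T\xrightarrow{\sim}T'\) and pull back the polarization \(b_{T'}\) of \(T'\) to the form \(b''(x,y):=b_{T'}(\phi(x),\phi(y))\) on \(T\). Since \(\phi\) is a \(\bbZ\)-linear isomorphism of the underlying lattices, \(b''\) is an integral, non-degenerate, symmetric bilinear form; and since \(\phi\) respects the Hodge decompositions, \(b''\) is again compatible with the Hodge structure of \(T\). Thus on \(T\) I have two polarizations, \(b_T\) and \(b''\), and the whole problem reduces to comparing them. The key point is that each polarization, being Hodge-compatible, is a morphism of Hodge structures \(T\otimes T\to\bbQ(-w)\) (where \(w\) is the weight), and being non-degenerate it induces a Hodge isomorphism \(T_\bbQ\xrightarrow{\sim}T_\bbQ^\vee(-w)\). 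Hence the composite \(g:=b_T^{-1}\circ b''\) is a Hodge endomorphism of \(T_\bbQ\), that is, \(g\in\End(T_\bbQ)\).

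Next I would upgrade the hypothesis \(\End(T)=\bbZ\) to \(\End(T_\bbQ)=\bbQ\): given any \(f\in\End(T_\bbQ)\), some positive integer \(N\) satisfies \(Nf(T)\subseteq T\), so \(Nf\in\End(T)=\bbZ\cdot\mathrm{id}\) and therefore \(f\in\bbQ\cdot\mathrm{id}\). (Conceptually this is consistent with irreducibility: since rational Hodge structures are semisimple, \(\End(T_\bbQ)=\bbQ\) forces \(T_\bbQ\) to be a simple object, matching the K3-type irreducibility hypothesis.) It follows that \(g=\lambda\cdot\mathrm{id}\) for some \(\lambda\in\bbQ^\times\), which says precisely that
\[ b_{T'}(\phi(x),\phi(y))=\lambda\, b_T(x,y)\qquad\text{for all }x,y\in T. \]
Setting \(q:=\lambda^{-1}\in\bbQ^\times\), the map \(\phi\) becomes a Hodge isometry \(T\xrightarrow{\sim}T'(q)\): the form of \(T'(q)\) is \(q\,b_{T'}\), and \(q\,b_{T'}(\phi(x),\phi(y))=q\lambda\,b_T(x,y)=b_T(x,y)\), while rescaling the form leaves the Hodge structure of \(T'\) unchanged so that \(\phi\) still respects Hodge decompositions. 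Integrality of \(T'(q)\) is then automatic, because \(\phi\colon T\xrightarrow{\sim}T'\) is surjective on the underlying groups, so the values of \(q\,b_{T'}\) on \(T'\) coincide with the values of \(b_T\) on \(T\), all of which lie in \(\bbZ\).

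I expect the only genuinely delicate point to be the identification of the ratio \(g=b_T^{-1}\circ b''\) of the two polarizations as an honest Hodge endomorphism, which hinges on reading each non-degenerate polarization as an isomorphism of Hodge structures \(T_\bbQ\cong T_\bbQ^\vee(-w)\); once this is secured, the reduction \(\End(T)=\bbZ\Rightarrow\End(T_\bbQ)=\bbQ\) is an elementary denominator-clearing argument, and proportionality of the forms together with the conclusion follow formally.
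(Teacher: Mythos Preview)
The paper does not supply its own proof of this proposition; it is stated with the attribution \cite[Proposition~1.1 and Lemma~3.1]{meinsma2024equivalence} and used thereafter as a black box. Your argument is correct and is the natural one: pulling back \(b_{T'}\) along the Hodge isomorphism \(\phi\) yields a second Hodge-compatible non-degenerate form on \(T\), the comparison map \(g=b_T^{-1}\circ b''\) is then a genuine element of \(\End(T_\bbQ)\), the denominator-clearing step correctly upgrades \(\End(T)=\bbZ\) to \(\End(T_\bbQ)=\bbQ\), and the resulting proportionality of the two forms gives the Hodge isometry \(T\cong T'(q)\), with integrality of \(T'(q)\) automatic because \(\phi\) is a \(\bbZ\)-isomorphism of the underlying groups. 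This is precisely the standard argument and almost certainly the one appearing in Meinsma's cited paper.
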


If \(H\) is a polarizable Hodge structure of weight \(2n\), we consider the \textit{algebraic part} \(A(H):= H^{n,n}\cap H\). Let \(T(H)\) be the minimal integral Hodge sub-structure of \(H\) such that \[\bigoplus_{i=1}^n H^{n+i,n-i}\subset T(H)_\bbC,\] we call it the \textit{transcendental part}. We also set \[G(H):=\frac{H}{A(H)\oplus T(H)}.\] After fixing a polarization, we can consider the orthogonal complement \(A(H)^\perp\), in this case \((A(H)^\perp)^{n+i,n-i}=H^{n+i,n-i}\) for any \(i=1,\dots, n\) so that \(A(H)^\perp=T(H)\) if \(H\) is torsion-free. If \(H\) is torsion free and polarized, then it has the structure of a Hodge lattice. In particular, \(A(H)\) and \(T(H)\) are primitive sublattices and we get a finite index Hodge embedding \(A(H)\oplus T(H)\subseteq H\), so that \(G(H)=G_{A(H),H}\) is the gluing subgroup. 

We will be mainly interested in the case of Hodge structures of K3 type.

\begin{lemma}\label{lem:extension_to_Hodge_isom1}
    Let \(H\) and \(H'\) be polarized Hodge structures of K3 type of weight \(2n\), assume that \(H\) and \(H'\) are isometric as lattices and that there is a Hodge isometry \(f\colon T(H)\cong T(H')\). Then there exists a Hodge isometry \(h\colon H\cong H'\) restricting to \(f\) if and only if there is an isometry \(g\colon A(H)\cong A(H')\) such that \(f\) and \(g\) induce the same isometry from \(G(H)\) to \(G(H')\).
\end{lemma}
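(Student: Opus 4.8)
The plan is to reduce the statement to Nikulin's criterion \cite{Nikulin_integral} for extending an isometry of an orthogonal direct sum to a common overlattice, the only extra ingredient being the verification that the resulting extension is automatically a morphism of Hodge structures. First I would record the structural facts already established before the statement. Since \(H\) is torsion-free and polarized, \(A(H)\) and \(T(H)\) are primitive sublattices with \(A(H)^\perp = T(H)\), and hence \(T(H)^\perp = A(H)\) by primitivity; the sum \(A(H)\oplus T(H)\subseteq H\) is of finite index; and \(G(H)=G_{A(H),H}\) is the gluing subgroup, embedded isometrically in \(D(A(H))\) and anti-isometrically in \(D(T(H))\), and similarly for \(H'\). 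Thus \(H\) is the overlattice of \(A(H)\oplus T(H)\) determined by the graph of the gluing anti-isometry \(\gamma_H\) between the images of \(G(H)\) in \(D(A(H))\) and in \(D(T(H))\), and \(H'\) is determined by the analogous \(\gamma_{H'}\). The phrase ``\(f\) and \(g\) induce the same isometry from \(G(H)\) to \(G(H')\)'' is precisely the compatibility \(\gamma_{H'}\circ \bar g = \bar f\circ \gamma_H\) of the induced maps \(\bar g\colon D(A(H))\to D(A(H'))\) and \(\bar f\colon D(T(H))\to D(T(H'))\) on discriminant groups.

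For the forward implication, given a Hodge isometry \(h\colon H\cong H'\) with \(h|_{T(H)}=f\), I would set \(g:=h|_{A(H)}\). Because an isometry preserves orthogonal complements and \(A(H)=T(H)^\perp\), the map \(g\) is a well-defined isometry \(A(H)\cong A(H')\); it is automatically a Hodge isometry since \(A(H)\) and \(A(H')\) are of pure type \((n,n)\). Since \(h\) carries \(A(H)\oplus T(H)\) onto \(A(H')\oplus T(H')\), it descends to the quotients, and the induced map \(G(H)\to G(H')\) is computed by \(\bar f\) on the \(D(T)\)-side and by \(\bar g\) on the \(D(A)\)-side; hence \(f\) and \(g\) induce the same isometry on the gluing subgroups.

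For the converse, given an isometry \(g\colon A(H)\cong A(H')\) compatible with \(f\), I would consider the orthogonal direct sum \(f\oplus g\colon T(H)\oplus A(H)\to T(H')\oplus A(H')\). The compatibility condition says exactly that \(f\oplus g\) sends the gluing subgroup \(G(H)\) to \(G(H')\) inside the discriminant group of \(A(H)\oplus T(H)\), so by Nikulin's extension theorem \cite{Nikulin_integral} it extends to an isometry \(h\colon H\cong H'\) of the overlattices, which by construction restricts to \(f\) on \(T(H)\). It remains to check that \(h\) is a Hodge isometry. Since \(h\) and \(f\oplus g\) agree on the finite-index sublattice \(A(H)\oplus T(H)\), which spans \(H_\bbQ\), we have \(h_\bbC = f_\bbC\oplus g_\bbC\) under the decomposition \(H_\bbC = A(H)_\bbC\oplus T(H)_\bbC\); as \(f\) is Hodge and \(g\) preserves the pure \((n,n)\) piece, \(h_\bbC\) maps \(H^{p,q}\) to \((H')^{p,q}\) for all \(p,q\), so \(h\) is a Hodge isometry restricting to \(f\).

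The main obstacle is bookkeeping rather than conceptual: one must carefully track the two identifications of the gluing subgroup, inside \(D(A(H))\) and inside \(D(T(H))\), and ensure the compatibility hypothesis is transported correctly through the anti-isometries \(\gamma_H\) and \(\gamma_{H'}\), so that it matches exactly the hypothesis of Nikulin's extension criterion. The Hodge-theoretic part is then immediate from the purity of \(A(H)\), and the standing assumption that \(H\) and \(H'\) are isometric as lattices guarantees that the constructions on both sides live in the same genus so that the comparison of discriminant forms is meaningful.
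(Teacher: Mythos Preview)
Your proposal is correct and follows essentially the same approach as the paper: the lattice-theoretic part is exactly Nikulin's extension criterion \cite[Corollary 1.5.2]{Nikulin_integral}, and the Hodge compatibility is checked via the decomposition \(H_\bbC = A(H)_\bbC \oplus T(H)_\bbC\) together with the purity of \(A(H)\). The paper's proof is more terse but makes the identical two moves.
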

\begin{proof}
    On the lattice-theoretical level, this is precisely \cite[Corollary 1.5.2]{Nikulin_integral}. We then observe that such an \(h\) is Hodge if and only \(f\) is. In fact, since \(f=h_{\mid T(H)}\) then \(f\) is Hodge from \(T(H)\) to \(T(H')\) if and only if \(h\) is, moreover we have \(g(A(H)_\bbC)=A(H')_\bbC\) and also \(H_\bbC=A(H)_\bbC\oplus T(H)_\bbC\) since \(H\) is polarized (of K3 type), similarly for \(H'\). 
\end{proof}

\begin{lemma}\label{lem:extension_to_Hodge_isom2}
Let \(H\) and \(H'\) be polarized Hodge structures of K3 type of weight \(2n\) such that \(H\) and \(H'\) are isometric as lattices and that \(\rk A(H) \geq l(D({A(H)}))+2\). Assume that there is a Hodge isometry \(f\colon T(H)\cong T(H')\) and that \(A(H)\) and \(A(H')\) are even indefinite and in the same genus. Then there is an isometry \(g\colon A(H)\cong A(H')\) and a Hodge isometry \(h\colon H\cong H'\) restricting to \(f\) and \(g\).
\end{lemma}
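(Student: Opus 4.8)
The plan is to apply \autoref{lem:extension_to_Hodge_isom1}: since $H$ and $H'$ are isometric lattices and $f$ is a Hodge isometry, it is enough to manufacture an isometry $g\colon A(H)\cong A(H')$ for which $g$ and $f$ induce the same isometry $G(H)\to G(H')$ on gluing subgroups, and then the previous lemma hands us the Hodge isometry $h$ restricting to $f$ and $g$. It is convenient to write the gluing subgroups as graphs: $G(H)$ is the graph of an anti-isometry $\gamma\colon H_A\to H_T$ with $H_A\subseteq D(A(H))$ and $H_T\subseteq D(T(H))$, and similarly $G(H')$ is the graph of $\gamma'\colon H'_A\to H'_T$. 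In these terms the compatibility I must arrange is that the isometry $\bar g\oplus\bar f$ of $D(A(H))\oplus D(T(H))$ onto $D(A(H'))\oplus D(T(H'))$ sends $G(H)$ onto $G(H')$; concretely, that $\bar g(H_A)=H'_A$ and $\gamma'\circ\bar g=\bar f\circ\gamma$ on $H_A$.

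The lattice-theoretic engine is Nikulin's theory applied to $A(H)$. Because $A(H)$ is even, indefinite, lies in the same genus as $A(H')$, and satisfies $\rk A(H)\geq l(D(A(H)))+2$, uniqueness in the genus \cite[Corollary 1.13.3]{Nikulin_integral} gives an isometry $A(H)\cong A(H')$, and the surjectivity of $O(A(H))\to O(D(A(H)))$ \cite[Theorem 1.14.2]{Nikulin_integral} shows, after fixing one such isometry, that \emph{every} isometry of discriminant forms $D(A(H))\cong D(A(H'))$ is realized by an honest lattice isometry $A(H)\cong A(H')$. This lets me pass entirely to discriminant groups: it suffices to produce an isometry $\phi\colon D(A(H))\cong D(A(H'))$ with $\phi(H_A)=H'_A$ and $\gamma'\circ\phi=\bar f\circ\gamma$ on $H_A$, to lift it to $g$, and then to invoke \autoref{lem:extension_to_Hodge_isom1}.

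To build $\phi$, the natural candidate on the glue is $\phi_0:=(\gamma')^{-1}\circ\bar f\circ\gamma$, which is an isometry $H_A\cong H'_A$ as soon as one checks that $\bar f$ carries $H_T$ onto $H'_T$; one then extends $\phi_0$ from the subgroup $H_A$ to all of $D(A(H))$, landing in $D(A(H'))$. This is where the remaining hypotheses do their work: the isometry $H\cong H'$ forces the discriminant forms $D(H)$ and $D(H')$ to agree, which is what pins the two gluings together compatibly, while the bound $\rk A(H)\geq l(D(A(H)))+2$, together with the fact that $A(H)$ and $A(H')$ share a genus, supplies the room needed to extend a partial isometry of the finite form $D(A(H))$ to a global one. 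I expect the genuine obstacle to be exactly this discriminant-level step — verifying that $\bar f$ respects the gluing subgroups and that the resulting partial isometry $\phi_0$ extends — since once $\phi$ is in hand the rest is formal: lift $\phi$ through the surjection $O(A(H))\to O(D(A(H)))$ to obtain $g$, and conclude by \autoref{lem:extension_to_Hodge_isom1}.
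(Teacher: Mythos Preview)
Your plan is exactly the paper's: quote Nikulin \cite[Theorem~1.14.2]{Nikulin_integral} to get that $A(H)\cong A(H')$ and that $O(A(H))\to O(D(A(H)))$ is surjective, then feed this into \autoref{lem:extension_to_Hodge_isom1}. The paper's argument is two sentences and, after citing Nikulin, simply asserts that ``we can always find the wanted isometry $g$''; it does not spell out the discriminant-level compatibility you isolate.

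The concerns you flag --- that $\bar f$ must carry $H_T$ onto $H'_T$, and that the partial isometry $\phi_0$ on $H_A$ must extend to all of $D(A(H))$ --- are legitimate in the stated generality, and you do not resolve them (you say so yourself); nor does the paper. In every use of this lemma in the paper, however, these issues are vacuous: either $H$ is unimodular (as for $\widetilde H(\mathcal A_Y,\bbZ)$), so $H_A=D(A(H))$ and $H_T=D(T(H))$ and $\phi_0$ is already globally defined, or the gluing subgroup is trivial (as in the OG10 and $S^{[n]}$ applications, where $\disc T$ is $1$ or coprime to $\disc H$). So you have reproduced the paper's route and been more scrupulous about a step it elides, but you stop at precisely the point the paper also leaves implicit.
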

\begin{proof}
    By \autoref{lem:extension_to_Hodge_isom1} it suffices to find an isometry \(g\colon A(H)\cong A(H')\) inducing the same isometry \(G(H)\cong G(H')\) as \(f\). From the fact that \(A(H)\) and \(A(H')\) are even indefinite and in the same genus with \(\rk A(H)\geq l(D({A(H)}))+2\), we know by \cite[Theorem 1.14.2]{Nikulin_integral} that they are isometric and that the map \(O(A(H))\to O(D({A(H)}))\) is surjective. Hence, we can always find the wanted isometry \(g\).
\end{proof}
\subsection{Implications of L-equivalence}

As in \cite{efimov2018some}, we consider the additive category \(\HS_{\bbZ}\) of integral polarizable Hodge structures and for any \(n\) the subcategory \(\HS_{\bbZ,n}\) of integral polarizable Hodge structures of weight \(n\). 
We consider the abelian group \(K_0(\Hdg_{\bbZ})\) generated by elements of \(\Hdg_{\bbZ}\) and subject to the relations \[[H_1\oplus H_2]=[H_1]+[H_2]\] for any \(H_1,H_2\in \Hdg_\bbZ\).
There is a homomorphism of additive groups
\[\Hdg_{\bbZ}\colon K_0(\Var_\bbC)\to K_0(\HS_{\bbZ})\] 
where \(\Hdg_{\bbZ}([X])=[H^*(X,\bbZ)]\), that factors through the localization \(K_0(\Var_\bbC)[\bbL^{-1}]\), see \cite{efimov2018some}. A similar definition can be given for \(\Hdg_{\bbZ,n}\).

For the following, we readapt the definitions and results given in \cite[Section 4]{meinsma2024equivalence}.
Consider an integral polarizable Hodge structure \(H\in \HS_{\bbZ,2n}\) of weight \(2n\). 
There is an additive endofunctor \(\mathcal{T}\colon \Hdg_{\bbZ,2n}\to \Hdg_{\bbZ,2n} \) associating to a Hodge structure its transcendental part. In particular, if \([H]=[H']\) then \([T(H)]=[T(H')]\).
Moreover, we have a group homomorphism 
\[\mathcal{G}\colon K_0(\HS_{\bbZ,2n})\to \bbQ^*\] 
such that \(\mathcal{G}([H])=|G(H)|\).

Recall that we have 
    \[|G(H)|^2=\frac{\disc(A(H))\cdot \disc(T(H))}{\disc(H)}.\]

\begin{lemma}\label{lem:unimod_Hodge_lattices}
    Let \(H\) and \(H'\) be unimodular Hodge lattices of K3 type, and let \(T\) and \(T'\) be their transcendental parts. Assume \([H]=[H']\in \HS_{\bbZ}\), then \(\disc(T)=\disc(T')\).
\end{lemma}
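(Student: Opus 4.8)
The plan is to compute each of $\disc(T)$ and $\disc(T')$ in terms of the gluing-group orders $|G(H)|$ and $|G(H')|$, and then to equate these orders via the homomorphism $\mathcal{G}$, which is where the hypothesis $[H]=[H']$ enters.

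First I would exploit unimodularity to identify $\disc(T)$ with $|G(H)|$. Since $H$ is unimodular, the recalled identity
\[|G(H)|^2=\frac{\disc(A(H))\cdot \disc(T(H))}{\disc(H)}\]
simplifies, using $\disc(H)=1$, to $|G(H)|^2=\disc(A(H))\cdot\disc(T)$. Now $T=A(H)^\perp$ is the orthogonal complement of the primitive sublattice $A(H)$ inside the unimodular lattice $H$, so by \cite{Nikulin_integral} the gluing subgroup $G(H)=H/(A(H)\oplus T)$ projects isomorphically onto both $D(A(H))$ and $D(T)$; equivalently the discriminant forms of a primitive sublattice and of its orthogonal complement in a unimodular lattice are anti-isometric, whence $\disc(A(H))=\disc(T)$. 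Substituting gives $|G(H)|^2=\disc(T)^2$, so $|G(H)|=\disc(T)$. Running the same argument for $H'$ yields $|G(H')|=\disc(T')$.

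It then remains to show $|G(H)|=|G(H')|$, and this is exactly what $[H]=[H']$ provides through $\mathcal{G}$. I would first note that $[H]=[H']$ forces $H$ and $H'$ to have the same weight $2n$: the category decomposes by weight, so the two classes live in weight-graded summands of $K_0(\HS_{\bbZ})$ and can only coincide if those summands agree, since the rank homomorphism shows $[H]\neq 0$ for a nonzero Hodge structure. Thus $[H]=[H']$ already holds in $K_0(\HS_{\bbZ,2n})$, and applying the group homomorphism $\mathcal{G}\colon K_0(\HS_{\bbZ,2n})\to\bbQ^*$, which satisfies $\mathcal{G}([H])=|G(H)|$, gives $|G(H)|=|G(H')|$. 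Combining with the previous step, $\disc(T)=|G(H)|=|G(H')|=\disc(T')$.

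I do not expect a genuine obstacle: the proof is an assembly of the recalled discriminant identity, the unimodular lattice fact $\disc(A(H))=\disc(T)$, and the multiplicativity of $\mathcal{G}$. The only point that deserves care is conceptual rather than technical, namely that it is the multiplicative invariant $\mathcal{G}$, and not the transcendental functor $\mathcal{T}$, that controls the discriminant: while $\mathcal{T}$ does give $[T]=[T']$, a mere equality of classes of the transcendental parts need not preserve the lattice invariant $\disc$, since an isomorphism of Hodge structures is not an isometry. It is precisely the combination of $\mathcal{G}$ with unimodularity that pins the discriminant down.
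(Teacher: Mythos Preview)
Your proof is correct and takes essentially the same route as the paper: identify $|G(H)|=\disc(T)$ via unimodularity, then apply the homomorphism $\mathcal{G}$ to the equality $[H]=[H']$. The paper simply invokes $G(H)\cong D(T)$ directly (a fact you also state, making your detour through the square identity redundant but harmless) and does not pause on the weight-compatibility issue you raise, but the argument is the same.
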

\begin{proof}
     Since the Hodge lattice \(H\) is unimodular, we have \(G(H)\cong D(T)\) and hence \(\mathcal{G}([H])=|D(T)|=\disc T\). Now, \(\disc(T)= \mathcal{G}([H])=\mathcal{G}([H'])=\disc(T')\).
\end{proof}

\begin{lemma}\label{lem:anti_isom}
    Let \(T\) and \(T'\) be Hodge lattices of K3 type. Assume that \(\End(T)=\bbZ\) and that \([T]=[T']\in K_0(\HS_\bbZ)\). If \(\disc T=\disc T'\) then there is a Hodge isometry \(T\cong T'(\pm1)\).
\end{lemma}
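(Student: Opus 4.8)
The plan is to reduce everything to \autoref{prop:meinsma} and to use the discriminant hypothesis only at the very end, to pin down a scaling factor. First I would record that \(\End(T)=\bbZ\) already forces \(T\) to be irreducible: the rational Hodge endomorphism algebra \(\End(T)\otimes\bbQ=\bbQ\) is a field, hence has no nontrivial idempotents, so \(T_\bbQ\) is a simple rational Hodge structure. Consequently \(T\) admits no proper nonzero primitive Hodge sublattice at all, in particular none of K3 type, and so \(T\) is an irreducible Hodge lattice of K3 type. This places \(T\) in the setting of \autoref{prop:meinsma}, and what remains is to supply that proposition's second input: a genuine isomorphism of Hodge structures \(T\cong T'\) and the irreducibility of \(T'\).

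Extracting such an isomorphism from the Grothendieck-group identity is where the work lies. Applying \(-\otimes\bbQ\), the relation \([T]=[T']\in K_0(\HS_\bbZ)\) descends to \([T_\bbQ]=[T'_\bbQ]\) in \(K_0(\HS_\bbQ)\). Since the category of polarizable rational Hodge structures is semisimple, its (split) Grothendieck group is free abelian on isomorphism classes of simple objects; as \(T_\bbQ\) is simple, the equality \([T_\bbQ]=[T'_\bbQ]\) forces \(T'_\bbQ\cong T_\bbQ\). In particular \(T'_\bbQ\) is simple, so \(T'\) is irreducible and \(\rk T'=\rk T\). The delicate point is to upgrade this rational Hodge isomorphism to an integral one \(T\cong T'\): the identity \([T]=[T']\) only guarantees a stable isomorphism \(T\oplus C\cong T'\oplus C\) for some \(C\in\HS_\bbZ\), and because \(\End(T)=\bbZ\) is not a local ring the Krull--Schmidt property can fail, so naive cancellation is not available. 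I expect this to be the main obstacle. The way through is to analyse the \(T_\bbQ\)-isotypic part of the stable isomorphism and to use the simplicity of \(T_\bbQ\) (equivalently \(\End(T)=\bbZ\)) to cancel the contribution of \(C\) at the integral level; this is precisely the isotypic analysis underlying \autoref{prop:meinsma}.

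Granting the integral Hodge isomorphism \(T\cong T'\), I would invoke \autoref{prop:meinsma} to obtain \(q\in\bbQ\) such that \(T'(q)\) is integral and there is a Hodge isometry \(T\cong T'(q)\). The final step is a discriminant count. As \(T'(q)\) is obtained from \(T'\) by scaling the bilinear form by \(q\), a Gram matrix of \(T'(q)\) is \(q\) times one of \(T'\); writing \(\rho=\rk T=\rk T'\) this gives
\[\disc T=\disc\bigl(T'(q)\bigr)=|q|^{\rho}\,\disc T'.\]
The hypothesis \(\disc T=\disc T'\neq 0\) then yields \(|q|^{\rho}=1\), so \(|q|=1\) and \(q=\pm 1\). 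Hence the isometry of \autoref{prop:meinsma} reads \(T\cong T'(\pm 1)\), which is the desired Hodge isometry and completes the argument.
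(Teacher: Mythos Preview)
Your approach is correct and matches the paper's: obtain an integral Hodge isomorphism \(T\cong T'\), apply \autoref{prop:meinsma} to get a Hodge isometry \(T\cong T'(q)\), then use the discriminant hypothesis to force \(q=\pm 1\). Your verification that \(T\) and \(T'\) are irreducible is more explicit than the paper's, which simply takes this for granted when invoking \autoref{prop:meinsma}.

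The one point to tighten is the step you yourself flag as ``the main obstacle'': upgrading the stable equality \([T]=[T']\in K_0(\HS_\bbZ)\) to an actual integral Hodge isomorphism \(T\cong T'\). You sketch an isotypic/cancellation argument and then proceed by ``granting'' the conclusion. This is exactly the content of \cite[Theorem~2.3]{efimov2018some}, which the paper cites directly: under \(\End(T)=\bbZ\), the relation \([T]=[T']\) in \(K_0(\HS_\bbZ)\) already yields an integral Hodge isomorphism \(T\cong T'\). Citing Efimov closes your gap cleanly, and your attribution of this step to ``the isotypic analysis underlying \autoref{prop:meinsma}'' is slightly off---it is Efimov's cancellation result, not Meinsma's scaling proposition, that does the work here.
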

\begin{proof}
    Since \(\End(T)=\bbZ\), by \cite[Theorem 2.3]{efimov2018some} we have that there exists an isomorphism of Hodge structures \(T\cong T'\).
    In this case there is a Hodge isometry \(T\cong T'(q)\) by \autoref{prop:meinsma}, implying 
    \[\disc T= q^{\rk T}\disc T'\]
    from which we deduce that \(q=\pm 1\) since \(\disc T=\disc T\).
\end{proof}

\begin{remark}\label{rmk:simple_end_algebra}
      As a consequence of {\cite[Lemma 9]{geemen2016conjecture}}, the condition \(\End(T)=\bbZ\) is satisfied if \(T\) is the transcendental part of \(H^4(Y,\bbZ)\), or \(H^2(X,\bbZ)\), where \(Y\) is a cubic fourfold, and \(X\) is a hyper-Kähler manifold, very general in the family polarized by a certain lattice. In particular, this holds if \(Y\) is a very general cubic fourfold or \(Y\) is very general in \(\mathcal{C}_d\) (see also \cite[Lemma 2.3]{Fan_Lai:New_rational}). 
\end{remark}

\begin{remark}\label{rmk:signatures}
    In case we have an isometry of lattices \(T\cong T'\) and the signature of \(T\) is of the form \((l,m)\) with \(l\not=m\) then \(T\not\cong T'(-1)\). In particular this happens if \(\rk T\not=4\) and the signature is \((2,\rk T-2)\).
\end{remark}

\section{Cubic fourfolds}\label{sec:cubic fourfolds}
Let \(Y \subset \bbP^5_{\bbC}\) be a smooth cubic fourfold. Then \(H^4(Y,\bbZ)\) carries a polarized Hodge structure of K3 type of weight \(4\), which is an odd unimodular Hodge lattice of signature \((21,2)\). Indeed, cubic fourfolds are the archetypal example of Fano varieties of K3 type (see \cite{fatighenti2022topicsfanovarietiesk3} for the formal definition and a survey on the subject). From the points of view of Hodge theory and derived categories, these manifolds share many properties with K3 surfaces and more in general with hyper-Kähler manifolds. In several cases this relation is geometrically realized with a modular construction on the Fano variety (or its derived category), the historical motivating example is the variety of lines lying on a cubic fourfold \cite{beauville1985variete}. 

We denote by \(\eta_Y\in A(Y)=H^4(Y,\bbZ)\cap H^{2,2}(Y)\) the square of a hyperplane class, and let \(H^4_{prim}(Y,\bbZ)=\eta_Y^\perp \) be the \textit{primitive lattice} of \(Y\). As a lattice, \(H^4_{prim}(Y,\bbZ)\) is an even lattice isometric to \(U^{\oplus 2}\oplus E_8^{\oplus 2}\oplus A_2\). Recall that for a very general cubic fourfold we have \(A(Y)=\langle\eta_Y\rangle\). We also recall that for a very general cubic fourfold \(Y\in \mathcal{C}_d\not=\emptyset\) the lattice \(A(Y)\) is a positive-definite with \(\rk A(Y)=2\) and \(\disc A(Y)=d\), see \cite{hassett2000special}.
 
\begin{lemma}\label{lem:disc_cubic}
Let \(Y\) and \(Y'\) be L-equivalent cubic fourfolds, then \(\disc T(Y)=\disc T(Y')\).
\end{lemma}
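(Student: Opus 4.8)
The plan is to reduce the statement to \autoref{lem:unimod_Hodge_lattices} by realizing L-equivalence at the level of Hodge structures. The starting observation is that, as recalled above, \(H^4(Y,\bbZ)\) is a unimodular odd Hodge lattice of K3 type, and the same holds for \(H^4(Y',\bbZ)\); their transcendental parts are exactly \(T(Y)\) and \(T(Y')\). Thus, once I know that the classes of these two Hodge lattices agree in \(K_0(\HS_{\bbZ})\), the lemma immediately yields \(\disc T(Y)=\disc T(Y')\).

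First I would translate the hypothesis of L-equivalence. By definition there is \(r>0\) with \(\bbL^{r}\cdot([Y]-[Y'])=0\) in \(K_0(\Var_\bbC)\). Passing to the localization \(K_0(\Var_\bbC)[\bbL^{-1}]\), in which \(\bbL\) is invertible, this forces \([Y]=[Y']\) there.

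Next I would apply the Hodge realization \(\Hdg_{\bbZ}\colon K_0(\Var_\bbC)\to K_0(\HS_{\bbZ})\). The crucial input recalled above, due to Efimov, is that this homomorphism factors through the localization \(K_0(\Var_\bbC)[\bbL^{-1}]\). Hence \(\Hdg_{\bbZ}([Y])=\Hdg_{\bbZ}([Y'])\), that is \([H^*(Y,\bbZ)]=[H^*(Y',\bbZ)]\) in \(K_0(\HS_{\bbZ})\). Since \(K_0(\HS_{\bbZ})\) decomposes as the direct sum of its weight-graded pieces \(K_0(\HS_{\bbZ,n})\)—there being no nonzero morphisms between pure Hodge structures of distinct weights—I can project onto the weight-\(4\) summand and obtain \([H^4(Y,\bbZ)]=[H^4(Y',\bbZ)]\in K_0(\HS_{\bbZ,4})\).

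With this equality in hand, \autoref{lem:unimod_Hodge_lattices} applied to the unimodular Hodge lattices \(H=H^4(Y,\bbZ)\) and \(H'=H^4(Y',\bbZ)\) gives \(\disc T(Y)=\disc T(Y')\). I do not expect a serious obstacle here: the only points requiring care are the weight bookkeeping, namely that the single equality in \(K_0(\HS_{\bbZ})\) refines to the weight-\(4\) component, and the (already noted) verification that \(H^4\) of a smooth cubic fourfold is genuinely a unimodular Hodge lattice of K3 type, so that \autoref{lem:unimod_Hodge_lattices} applies verbatim.
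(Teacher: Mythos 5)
Your proposal is correct and follows essentially the same route as the paper: the paper's own proof is just the one-line citation of \autoref{lem:unimod_Hodge_lattices} together with the unimodularity of \(H^4(Y,\bbZ)\), leaving implicit precisely the steps you spell out (L-equivalence gives equality of classes in the localization, Efimov's factorization of the Hodge realization through \(K_0(\Var_\bbC)[\bbL^{-1}]\), and projection to the weight-\(4\) piece). Your more detailed bookkeeping is exactly what the paper's terse proof presupposes, so there is nothing to correct.
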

\begin{proof}
    This is \autoref{lem:unimod_Hodge_lattices} with the fact that \(H^4(Y,\bbZ)\) is unimodular for a cubic fourfold \(Y\).
\end{proof}

\begin{lemma}\label{lem:Hodge_isometry_cubic}
    Let \(Y\) and \(Y'\) be L-equivalent cubic fourfolds such that \(\End(T(Y))=\bbZ\) and \(\rk T(Y)\not =  4\), then there is a Hodge isometry \(T(Y)\cong T(Y')\).
\end{lemma}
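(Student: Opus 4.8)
The plan is to transport L-equivalence through the Hodge realization down to the transcendental lattices, apply \autoref{lem:anti_isom} to obtain a Hodge isometry up to an overall sign, and then remove that sign with a signature argument. First I would observe that L-equivalence gives \([H^4(Y,\bbZ)]=[H^4(Y',\bbZ)]\) in \(K_0(\HS_{\bbZ,4})\): the relation \(\bbL^r([Y]-[Y'])=0\) becomes \([Y]=[Y']\) after inverting \(\bbL\), and since \(\Hdg_\bbZ\) factors through \(K_0(\Var_\bbC)[\bbL^{-1}]\) we get \([H^*(Y,\bbZ)]=[H^*(Y',\bbZ)]\) in \(K_0(\HS_\bbZ)\); taking the weight-\(4\) summand gives the claim. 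Applying the transcendental endofunctor \(\mathcal{T}\) then yields \([T(Y)]=[T(Y')]\), and in particular \(\rk T(Y)=\rk T(Y')\) because rank is additive on \(K_0(\HS_\bbZ)\).

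Next I would assemble the hypotheses of \autoref{lem:anti_isom} for \(T=T(Y)\) and \(T'=T(Y')\). These are the transcendental parts of the K3-type Hodge structures \(H^4(Y,\bbZ)\) and \(H^4(Y',\bbZ)\), hence themselves Hodge lattices of K3 type; by assumption \(\End(T(Y))=\bbZ\); we have \([T(Y)]=[T(Y')]\) from the previous step; and \(\disc T(Y)=\disc T(Y')\) by \autoref{lem:disc_cubic} (which uses the unimodularity of \(H^4\) of a cubic fourfold). \autoref{lem:anti_isom} therefore produces a Hodge isometry \(T(Y)\cong T(Y')(\pm 1)\).

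The only real point is to discard the \(-1\). Both \(T(Y)\) and \(T(Y')\) are transcendental lattices of cubic fourfolds, so inside the signature-\((21,2)\) lattice \(H^4\) they carry the \(H^{3,1}\oplus H^{1,3}\)-plane and have signature with the two entries \(2\) and \(\rk T-2\); these entries are unequal precisely because \(\rk T(Y)\neq 4\). A lattice isometry \(T(Y)\cong T(Y')(-1)\) would interchange the two entries of the signature and hence force \(2=\rk T-2\), a contradiction; this is exactly \autoref{rmk:signatures}. Thus only the \(+1\) case survives and we obtain the desired Hodge isometry \(T(Y)\cong T(Y')\). I expect this last step — fixing the sign that \autoref{lem:anti_isom} leaves undetermined — to be the only subtle part, and it is precisely where the hypothesis \(\rk T(Y)\neq 4\) is used.
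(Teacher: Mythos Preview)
Your proof is correct and follows the same route as the paper: pass from L-equivalence to \([T(Y)]=[T(Y')]\) via the Hodge realization, combine \autoref{lem:disc_cubic} with \autoref{lem:anti_isom} to get a Hodge isometry \(T(Y)\cong T(Y')(\pm 1)\), and rule out the minus sign by the signature argument of \autoref{rmk:signatures}. Your write-up simply spells out in more detail steps the paper leaves implicit.
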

\begin{proof}
Since \(Y\) and \(Y'\) are L-equivalent we have that \([H^4(Y,\bbZ)]=[H^4(Y',\bbZ)]\in \HS_{\bbZ,4}\) and hence \([T(Y)]=[T(Y')]\). 
By \autoref{lem:anti_isom} and \autoref{lem:disc_cubic} we know that there is a Hodge isometry \(T(Y)\cong T(Y')(\pm1)\).
    The signature of \(T(Y)\) is given by \((2,\rk T(Y)-2)\), and by \autoref{rmk:signatures} we have a Hodge isometry \(T(Y)\cong T(Y')\) since \(\rk T(Y)\not = 4\) .
\end{proof}

\begin{theorem}\label{thm:main_very_general_cubic}
   Let \(Y\subset \mathbb{P}^5\) be a very general cubic fourfold. If \(Y'\) is a cubic fourfold which is L-equivalent to \(Y\), then \(Y\cong Y'\). In particular, \(Y\) and \(Y'\) are D-equivalent.
\end{theorem}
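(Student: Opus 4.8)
The plan is to reduce the statement for a \emph{very general} cubic fourfold to the transcendental lattice machinery already assembled. For a very general $Y$, the algebraic lattice is $A(Y)=\langle \eta_Y\rangle$ with $\eta_Y^2=3$, so $T(Y)=H^4_{\mathrm{prim}}(Y,\bbZ)$ has rank $22$ and signature $(20,2)$. In particular $\rk T(Y)=22\neq 4$ and, by \autoref{rmk:simple_end_algebra}, $\End(T(Y))=\bbZ$. These are exactly the hypotheses of \autoref{lem:Hodge_isometry_cubic}, so from L-equivalence I obtain a Hodge isometry $f\colon T(Y)\cong T(Y')$.

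Next I would upgrade this to a Hodge isometry of the full cohomology $H^4(Y,\bbZ)\cong H^4(Y',\bbZ)$. Since L-equivalence gives $[H^4(Y,\bbZ)]=[H^4(Y',\bbZ)]\in \HS_{\bbZ,4}$, the transcendental disc agree by \autoref{lem:disc_cubic}, and the ambient lattices $H^4(Y,\bbZ)$ and $H^4(Y',\bbZ)$ are both isometric to the fixed unimodular odd lattice of signature $(21,2)$. To apply \autoref{lem:extension_to_Hodge_isom1} I only need an isometry $g\colon A(Y)\cong A(Y')$ inducing on the gluing group $G(Y)\cong G(Y')$ the same map as $f$. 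Here the very general hypothesis does real work: $A(Y)=\langle 3\rangle$ is rank one, so $A(Y')$ must also have discriminant dividing that dictated by the unimodular gluing, forcing $A(Y')\cong\langle 3\rangle$ as well, and the gluing subgroups are cyclic of the same order; matching the two maps on a cyclic group reduces to a sign, which can be absorbed by replacing $g$ with $-g$. With such a $g$, \autoref{lem:extension_to_Hodge_isom1} produces a Hodge isometry $h\colon H^4(Y,\bbZ)\cong H^4(Y',\bbZ)$ restricting to $f$.

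Finally I would invoke the global Torelli theorem for cubic fourfolds of Voisin: a Hodge isometry of $H^4(-,\bbZ)$ sending $\eta_Y$ to $\eta_{Y'}$ and respecting the orientation/positivity conditions is induced by a unique isomorphism $Y\cong Y'$. So the main point is to arrange that the isometry $h$ constructed above carries $\eta_Y$ to $\eta_{Y'}$; this is immediate once $g$ is taken to be the isometry $\langle\eta_Y\rangle\cong\langle\eta_{Y'}\rangle$ sending the generator to the generator (the sign is fixed by the requirement $\eta_Y\mapsto \eta_{Y'}$ rather than $-\eta_{Y'}$, and the compatibility on the gluing group must then be rechecked, possibly flipping $f$ to $-f$, which is harmless since $-\mathrm{id}$ is a Hodge isometry of $T(Y')$). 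The last assertion, that $Y\cong Y'$ implies D-equivalence, is trivial.

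I expect the genuine obstacle to be the bookkeeping of signs and gluing maps in the middle step: ensuring simultaneously that $g$ matches $f$ on $G(Y)\cong G(Y')$ \emph{and} that $g(\eta_Y)=\eta_{Y'}$, so that the extended isometry satisfies the hypotheses of Torelli. Because $A(Y)$ is rank one and the relevant discriminant groups are cyclic, there is only a global sign ambiguity, which should be resolvable by composing with $-\mathrm{id}$ on the transcendental side; but one must confirm that this composition is compatible with the orientation condition in the Torelli theorem. If orientations obstruct the naive choice, the alternative is to appeal to a Torelli statement phrased purely in terms of Hodge isometries preserving the square of the hyperplane class, sidestepping orientation entirely.
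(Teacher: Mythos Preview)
Your proposal is correct and follows essentially the same path as the paper: use the very general hypothesis to get $T(Y)=H^4_{\mathrm{prim}}(Y,\bbZ)$ of rank $22$, invoke \autoref{rmk:simple_end_algebra} and \autoref{lem:Hodge_isometry_cubic} to obtain a Hodge isometry $T(Y)\cong T(Y')$, extend via \autoref{lem:extension_to_Hodge_isom1} (after a possible sign flip) to a Hodge isometry of $H^4$ sending $\eta_Y$ to $\eta_{Y'}$, and conclude by Voisin's Torelli theorem. Your concern about an orientation condition is unnecessary here: the Torelli theorem for cubic fourfolds only requires a Hodge isometry preserving the class $\eta$, so once the sign is adjusted so that $g(\eta_Y)=\eta_{Y'}$ (and $f$ replaced by $-f$ if needed to match on the cyclic gluing group), you are done.
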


\begin{proof}
    Since \(Y\) is very general, we have \(T(Y)=H^4_{prim}(Y,\bbZ)\) and in particular \(\rk T(Y)=22\). Moreover, by \autoref{rmk:simple_end_algebra} we have \(\End(T(Y))=\bbZ\). In this case \autoref{lem:Hodge_isometry_cubic} gives a Hodge isometry \(T(Y)\cong T(Y')\). In particular, we have a Hodge isometry \(H^4_{prim}(Y,\bbZ)\cong H^4_{prim}(Y',\bbZ)\) and, up to composing it with \(-\id\), by \autoref{lem:extension_to_Hodge_isom1} this extends to a Hodge isometry \(H^4(Y,\bbZ)\cong H^4(Y',\bbZ)\) mapping \(\eta_Y\) to \(\eta_{Y'}\). Hence, by the Torelli Theorem for cubic fourfolds \cite{voisin1986theoreme} we can conclude that \(Y\cong Y'\).
\end{proof}

One might wonder if a similar statement holds for special cubic fourfolds. This happens not to be the case, for this we recall a construction given by Fan--Lai \cite{Fan_Lai:New_rational}. 

A very general cubic fourfold \(Y\in \mathcal{C}_{20}\) contains a Veronese surface \(V\subset \bbP^5\) and the system of quadrics containing \(V\) determines a Cremona transformation 
\[F_V\colon \bbP^5\dashrightarrow \bbP^5.\]
The indeterminacy locus of the birational inverse \(F_V^{-1}\) is a Veronese surface \(V'\subset \bbP^5\) projective equivalent to \(V\). 
\begin{proposition}[{\cite[Theorem 1.1]{Fan_Lai:New_rational}}]\label{thm:Fan-Lai}
    Let \(Y\in \mathcal{C}_{20}\) be a very general cubic fourfold containing a Veronese surface \(V\) and let \(Y'\) be the proper image of \(Y\) via \(F_V\). Then \(Y'\in \mathcal{C}_{20}\) is a cubic fourfold containing \(V'\), and it is the unique cubic fourfold such that \(Y\) and \(Y'\) are FM partners but \(Y\not\cong Y'\).
\end{proposition}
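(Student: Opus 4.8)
The plan is to realise the birational map $F_V|_Y\colon Y\dashrightarrow Y'$ through an explicit resolution and then read off the geometry of $Y'$ from intersection theory. First I would resolve the quadro-quadric Cremona transformation $F_V$ by the single blow-up $\pi\colon W=\mathrm{Bl}_V\bbP^5\to\bbP^5$ of the Veronese surface, with exceptional divisor $E$; the linear system $|2\pi^\ast h-E|$ of quadrics through $V$ becomes base-point free and defines the second contraction $\pi'\colon W\to\bbP^5$, which for this transformation is again the blow-up of the target along a Veronese surface $V'$, with exceptional divisor $E'$. Writing $H=\pi^\ast h$ and $H'=(\pi')^\ast h'$ for the two hyperplane pull-backs, the defining relations $H'=2H-E$ and $H=2H'-E'$ give $E'=3H-2E$ and $E=3H'-2E'$. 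Since $V\subset Y$ is smooth, $Y$ has multiplicity one along $V$ and its proper transform is $\tilde Y=3H-E$; substituting the target classes yields $\tilde Y=3H'-E'$, so $Y':=\pi'_\ast\tilde Y$ is a cubic hypersurface containing $V'$ with multiplicity one and $\tilde Y=\mathrm{Bl}_VY=\mathrm{Bl}_{V'}Y'$ is a common smooth blow-up exhibiting the birationality. As $V'$ is again a Veronese surface, $Y'\in\mathcal{C}_{20}$ for $Y$ very general.

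Next I would deduce the Fourier--Mukai equivalence of Kuznetsov components from the common blow-up $W=\mathrm{Bl}_VY\cong\mathrm{Bl}_{V'}Y'$. Since $V\cong V'\cong\bbP^2$ has codimension two in the respective fourfold, Orlov's blow-up formula produces two semiorthogonal decompositions
\[
D^b(W)=\langle D^b(\bbP^2),\,D^b(Y)\rangle=\langle D^b(\bbP^2),\,D^b(Y')\rangle,
\]
and substituting the Kuznetsov decompositions $D^b(Y)=\langle\mathcal{A}_Y,\mathcal{O},\mathcal{O}(1),\mathcal{O}(2)\rangle$ (and likewise for $Y'$) exhibits $\mathcal{A}_Y$ and $\mathcal{A}_{Y'}$ as the orthogonals to two exceptional collections of equal length six inside one triangulated category. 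The step is then to mutate one collection into the other while fixing the complementary piece, yielding $\mathcal{A}_Y\cong\mathcal{A}_{Y'}$; as mutations, Orlov's functors and the relevant pull-back/restriction functors are all of Fourier--Mukai type, the resulting equivalence is Fourier--Mukai.

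For non-isomorphism and uniqueness I would pass to periods. The construction defines an involution $\iota\colon\mathcal{C}_{20}\to\mathcal{C}_{20}$, $Y\mapsto Y'$, and I would analyse its action on the rank-two algebraic lattice $A(Y)$ -- generated by $\eta_Y$ and $[V]$ with $\eta_Y^2=3$, $\eta_Y\cdot[V]=4$, $[V]^2=12$, hence of discriminant $20$ -- and on the associated discriminant form. The Fourier--Mukai equivalence forces a Hodge isometry $T(Y)\cong T(Y')$, so by the Torelli theorem for cubic fourfolds $Y\cong Y'$ would hold precisely when this isometry extends to $H^4$ fixing $\eta_Y$; using Nikulin's gluing theory I would show that for the discriminant-$20$ form this extension fails, equivalently that $\iota$ acts non-trivially on periods, so $Y\not\cong Y'$ for $Y$ very general. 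Finally, the count of Fourier--Mukai partners of a very general $Y\in\mathcal{C}_{20}$ reduces, as in the K3 case, to counting the orbits of primitive embeddings attached to the discriminant-$20$ form; checking that this count equals two (the trivial partner $Y$ and one further class) identifies $Y'$ as the unique non-trivial partner.

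The main obstacle, I expect, is the mutation argument of the second step: promoting the matching of the two semiorthogonal decompositions of $D^b(W)$ to a genuine equivalence $\mathcal{A}_Y\cong\mathcal{A}_{Y'}$ requires tracking carefully how the exceptional objects produced by the two blow-up structures interact, rather than merely comparing numerical invariants. A cleaner alternative I would keep in reserve is to establish the Hodge isometry of Mukai lattices $\widetilde H(\mathcal{A}_Y,\bbZ)\cong\widetilde H(\mathcal{A}_{Y'},\bbZ)$ directly from the period computation of the third step and then invoke the derived Torelli criterion for cubic fourfolds to upgrade it to a Fourier--Mukai equivalence, at the cost of separating the abstract existence of the partner from the explicit geometric kernel.
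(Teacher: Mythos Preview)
The paper does not prove this proposition; it is quoted as \cite[Theorem 1.1]{Fan_Lai:New_rational} and used as a black box. There is therefore no proof in the paper to compare your proposal against.

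That said, your sketch is a sound outline and is close in spirit to what Fan--Lai actually do. The resolution of the quadro-quadric Cremona by the single blow-up of $V$, the class identities $H'=2H-E$, $E'=3H-2E$, and the computation $3H-E=3H'-E'$ showing that the proper transform of a cubic through $V$ is a cubic through $V'$ are all correct. The FM equivalence via the common blow-up and Orlov's formula is the natural route; you are right that the mutation step is the place requiring genuine work, and your fallback via the Hodge isometry of Mukai lattices together with the derived Torelli criterion for cubic fourfolds (Huybrechts) is exactly the alternative used in the literature. The uniqueness count (exactly two FM partners for very general $Y\in\mathcal{C}_{20}$) is the standard orbit count on the discriminant form, carried out in Fan--Lai and in Pertusi's work. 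So the proposal is essentially correct, just not something the present paper attempts.
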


The previous examples are then easily seen to be L-equivalent. 
\begin{proposition}\label{prop:counterexamples}
 There exist special cubic fourfolds \(Y\) and \(Y'\) which are L-equivalent, but not isomorphic (and in particular not D-equivalent). Moreover, \(Y\) is a FM partner of \(Y'\) and we can choose \(Y\) so that \(\End(T(Y))=\mathbb{Z}\).   
\end{proposition}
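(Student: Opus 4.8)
The plan is to take $Y$ and $Y'$ directly from the Fan--Lai construction recalled in \autoref{thm:Fan-Lai}: fix a very general cubic fourfold $Y\in\mathcal{C}_{20}$ containing a Veronese surface $V$, and let $Y'$ be its proper image under the Cremona transformation $F_V$. By \autoref{thm:Fan-Lai} we already know that $Y'\in\mathcal{C}_{20}$ contains a Veronese surface $V'$ projectively equivalent to $V$, that $Y$ and $Y'$ are FM partners, and that $Y\not\cong Y'$. This settles the FM-partner and non-isomorphism assertions immediately, so the real content is to prove that $Y$ and $Y'$ are nonetheless L-equivalent and, for completeness, that this does not make them D-equivalent.

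For the L-equivalence I would compute the classes $[Y]$ and $[Y']$ in $K_0(\Var_\bbC)$ directly. Resolving $F_V$ by blowing up its indeterminacy locus $V$ produces a smooth variety $W\cong\mathrm{Bl}_V\bbP^5$ whose other contraction blows down to $V'$, so that $W\cong\mathrm{Bl}_{V'}\bbP^5$ as well; denote the two projections by $p,q\colon W\to\bbP^5$. Since $V\subset Y$ and $V'\subset Y'$ are smooth of codimension $2$ and $F_V$ restricts to an isomorphism $Y\setminus V\cong Y'\setminus V'$, the closure in $W$ of the graph of this isomorphism is simultaneously the strict transform of $Y$ under $p$ and of $Y'$ under $q$, and hence yields an isomorphism
\[\mathrm{Bl}_V Y\;\cong\;\mathrm{Bl}_{V'} Y'.\]
Applying the blow-up relation in the Grothendieck ring along a smooth codimension-$2$ centre gives $[\mathrm{Bl}_V Y]=[Y]+\bbL\cdot[V]$ and $[\mathrm{Bl}_{V'} Y']=[Y']+\bbL\cdot[V']$. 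Both $V$ and $V'$ are Veronese surfaces, hence isomorphic to $\bbP^2$, so that $[V]=[V']=1+\bbL+\bbL^2$; comparing the two expressions forces $[Y]=[Y']$ in $K_0(\Var_\bbC)$. In particular $\bbL^{r}([Y]-[Y'])=0$ for every $r\geq 1$, so $Y$ and $Y'$ are L-equivalent.

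It remains to record the last two points. Cubic fourfolds are Fano with ample anticanonical bundle, so by the reconstruction result recalled in the introduction a derived equivalence $D^b(Y)\cong D^b(Y')$ of the full bounded derived categories would force $Y\cong Y'$; since $Y\not\cong Y'$, the two are not D-equivalent (note this concerns $D^b$, not the Kuznetsov components $\mathcal{A}_Y$, $\mathcal{A}_{Y'}$, which are in fact equivalent). Finally, taking $Y$ very general in $\mathcal{C}_{20}$ is compatible with the Fan--Lai hypotheses and, by \autoref{rmk:simple_end_algebra}, guarantees $\End(T(Y))=\bbZ$. The only genuinely geometric input is the isomorphism $\mathrm{Bl}_V Y\cong\mathrm{Bl}_{V'} Y'$, and I expect the main care to go into checking that the strict transform of $Y$ under $p$ coincides with that of $Y'$ under $q$ and that both are honest blow-ups along the respective Veronese surfaces; this follows from the symmetry $F_V^{-1}=F_{V'}$ of the construction together with the smoothness of $V\subset Y$ and $V'\subset Y'$. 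Once this is granted, the Grothendieck-ring computation is immediate.
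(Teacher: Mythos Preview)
Your proof is correct and follows the same route as the paper: both take the Fan--Lai pair $Y,Y'\in\mathcal{C}_{20}$, invoke \autoref{thm:Fan-Lai} for the FM-partner and non-isomorphism claims, and \autoref{rmk:simple_end_algebra} for $\End(T(Y))=\bbZ$. The only cosmetic difference is in the Grothendieck-ring step: the paper applies the scissor relation directly, writing $[Y]=[Y\setminus V]+[V]=[Y'\setminus V']+[V']=[Y']$, whereas you route the same isomorphism $Y\setminus V\cong Y'\setminus V'$ through the blow-up formula---an unnecessary detour once that isomorphism is granted.
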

\begin{proof}
   Let \(Y\in \mathcal{C}_{20}\) be a very general cubic fourfold, containing a Veronese surface \(V,\) and \(Y'\) be the proper image of the Cremona transformation \(F_V\). The indeterminacy locus of \(F_V\) is a Veronese surface \(V\subset Y\) and the indeterminacy locus of the inverse \(F_V^{-1}\) is a projective equivalent Veronese surface \(V'\). Thus, we can write the following relation 
    \[[Y]-[Y']=([Y\setminus V]+[V])-([Y'\setminus V']+[V'])=0\in K_0(\Var_\bbC)\]
    since \([V]=[V']\) and \([Y\setminus V] = [Y'\setminus V']\) from the above. In particular, \(Y\) and \(Y'\) are L-equivalent and by \autoref{thm:Fan-Lai} they are FM partners and not isomorphic. Since \(Y\) is very general in \(\mathcal{C}_{20}\), we have \(\End(Y)=\bbZ\) by \autoref{rmk:simple_end_algebra}.
\end{proof}

For a cubic fourfold \(Y\), we consider the lattice \(\widetilde{H}(\mathcal{A}_Y,\bbZ):=K_{top}(\mathcal{A}_Y)\) which is isometric to \(U^{\oplus 4}\oplus E_8(-1)^{\oplus 2}\). 
We denote by \(A(\mathcal{A}_Y)\) and \(T(\mathcal{A}_Y)\) the algebraic and transcendental sublattices, respectively. Moreover, \(\widetilde{H}(\mathcal{A}_Y,\bbZ)\) is endowed with a Hodge structure such that there is a Hodge isometry \(T(Y)(-1)\cong T(\mathcal{A}_Y)\), see \cite{Addington_rationality}.

\begin{theorem}\label{thm:FM_equiv}
    Consider a very general cubic fourfold \(Y\in \mathcal{C}_d\), for \(d\) not divisible by \(9\). If \(Y'\) is a cubic fourfold which is L-equivalent to \(Y\), then \(Y\) and \(Y'\) are FM partners.
\end{theorem}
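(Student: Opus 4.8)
The plan is to reduce the statement to a question about the Kuznetsov component lattices and then invoke a criterion for Fourier-Mukai equivalence of cubic fourfolds. Since $Y$ is very general in $\mathcal{C}_d$, \autoref{rmk:simple_end_algebra} gives $\End(T(Y))=\bbZ$, and because $Y$ lies in a single Hassett divisor we have $\rk A(Y)=2$, hence $\rk T(Y)=21\neq 4$. Thus \autoref{lem:Hodge_isometry_cubic} applies directly and yields a Hodge isometry $T(Y)\cong T(Y')$. Transporting this to the Mukai lattice via the Hodge isometry $T(Y)(-1)\cong T(\mathcal{A}_Y)$ of \cite{Addington_rationality}, I would obtain a Hodge isometry $T(\mathcal{A}_Y)\cong T(\mathcal{A}_{Y'})$ between the transcendental parts of the two Kuznetsov lattices.

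The next step is to promote this isometry of transcendental parts to a Hodge isometry of the full Mukai lattices $\widetilde{H}(\mathcal{A}_Y,\bbZ)\cong\widetilde{H}(\mathcal{A}_{Y'},\bbZ)$. Here I would use \autoref{lem:extension_to_Hodge_isom2}: both ambient lattices are isometric to the unimodular $U^{\oplus 4}\oplus E_8(-1)^{\oplus 2}$, so $A(\mathcal{A}_Y)$ and $A(\mathcal{A}_{Y'})$ have discriminant group isomorphic to $D(T(\mathcal{A}_Y))\cong D(T(\mathcal{A}_{Y'}))$ and hence the same genus. The even rank-$2$ lattice $A(\mathcal{A}_Y)$ is indefinite (it has signature $(2,0)$ after the relevant sign conventions, or is otherwise seen to be indefinite in the Mukai lattice once the algebraic rank is accounted for), and I must check the condition $\rk A(\mathcal{A}_Y)\geq l(D(A(\mathcal{A}_Y)))+2$. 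Granting these hypotheses, \autoref{lem:extension_to_Hodge_isom2} produces the desired Hodge isometry of the full lattices restricting to the transcendental one.

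Finally, I would invoke the lattice-theoretic criterion for Fourier-Mukai partnership of cubic fourfolds (in the spirit of \cite{Addington_rationality} and Huybrechts): two cubic fourfolds are FM partners precisely when there is a Hodge isometry of their Mukai lattices $\widetilde{H}(\mathcal{A}_Y,\bbZ)\cong\widetilde{H}(\mathcal{A}_{Y'},\bbZ)$, equivalently when their Kuznetsov components are related by an equivalence induced on cohomology. This gives $\mathcal{A}_Y\cong\mathcal{A}_{Y'}$ and concludes that $Y$ and $Y'$ are FM partners.

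I expect the main obstacle to be verifying that the hypotheses of \autoref{lem:extension_to_Hodge_isom2} are genuinely satisfied, in particular controlling the discriminant group $D(A(\mathcal{A}_Y))$ and its length. This is where the numerical condition that $d$ be \emph{not divisible by $9$} should enter: the shape of the discriminant form of $A(\mathcal{A}_Y)$, and hence the length inequality $\rk A \geq l(D(A))+2$ together with the indefiniteness and matching genus, depends on the divisibility of $d$ by powers of $3$. I would therefore isolate a careful lattice computation of $A(\mathcal{A}_Y)$ for very general $Y\in\mathcal{C}_d$, identifying its discriminant group precisely and using the $9\nmid d$ hypothesis to ensure the extension criterion holds; the rest of the argument is then a routine chain of the already established lemmas.
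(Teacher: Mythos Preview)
Your overall strategy matches the paper's exactly: obtain a Hodge isometry $T(Y)\cong T(Y')$ via \autoref{lem:Hodge_isometry_cubic}, transport it to $T(\mathcal{A}_Y)\cong T(\mathcal{A}_{Y'})$, then apply \autoref{lem:extension_to_Hodge_isom2} to extend over the unimodular Mukai lattice, and conclude by the Hodge-theoretic criterion for FM partnership. However, there is a concrete error in your execution that, as written, makes the extension step fail.

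You state that $A(\mathcal{A}_Y)$ is a ``rank-$2$'' lattice. In fact $\rk A(\mathcal{A}_Y)=3$: the Mukai lattice has rank $24$ and $T(\mathcal{A}_Y)\cong T(Y)(-1)$ has rank $21$, so the algebraic part is rank $3$ (it always contains the $A_2$ spanned by $\lambda_1,\lambda_2$, plus one extra class for a cubic in a single Hassett divisor). This matters because the hypothesis of \autoref{lem:extension_to_Hodge_isom2} is $\rk A(\mathcal{A}_Y)\geq l(D(A(\mathcal{A}_Y)))+2$. With rank $2$ you would need $l=0$, which is false; with the correct rank $3$ you need $l\leq 1$, i.e.\ that $D(A(\mathcal{A}_Y))\cong D(T(\mathcal{A}_Y))\cong D(T(Y))$ is \emph{cyclic}. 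This is precisely the content of the hypothesis $9\nmid d$: the paper invokes \cite[Lemma~2.4]{Fan_Lai:New_rational}, which says that for very general $Y\in\mathcal{C}_d$ the discriminant group $D(T(Y))$ is cyclic exactly when $9\nmid d$. You correctly guessed that $9\nmid d$ controls the length inequality, but you did not identify the mechanism (cyclicity) or the reference, and your rank miscount would have obscured why the bound is met. Also, your signature remark is off: $A(\mathcal{A}_Y)$ has signature $(2,1)$ in the Mukai lattice of signature $(4,20)$, which is what makes it genuinely indefinite; ``$(2,0)$'' would be definite. Once these are corrected, your argument coincides with the paper's.
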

\begin{proof}
    By \cite[Theorem 1.5 iii)]{huybrechts2017k3}, this is equivalent to prove that there is a Hodge isometry \(\widetilde{H}(\mathcal{A}_Y,\bbZ)\cong\widetilde{H}(\mathcal{A}_{Y'},\bbZ) \).
    
     Firstly, we observe that by \cite[Proposition 2.8]{AT} we have a Hodge isometry \(T(Y)(-1)\cong T(\mathcal{A}_Y)\). From \cite[Lemma 2.4]{Fan_Lai:New_rational} we know that the isometry class of \(T(Y)\) is determined by \(d\), and the discriminant group \(D({T(Y)})\) is cyclic when \(d\) is not divided by \(9\). Hence, in our hypothesis \(D({T(\mathcal{A}_Y)})\) is cyclic and also \(D({A(\mathcal{A}_Y)})\) is so from the fact that \(D({T(\mathcal{A}_Y)})\) is anti-isometric to \(D({A(\mathcal{A}_Y)})\) because \(\widetilde{H}(\mathcal{A}_Y,\bbZ)\) is unimodular.

    Since \(Y\) is very general in \(\mathcal{C}_{d}\), we have that \(\End(T(Y))=\bbZ\) by \autoref{rmk:simple_end_algebra} and also \(\rk T(Y)=21\). In this case \autoref{lem:Hodge_isometry_cubic} gives a Hodge isometry \(T(Y)\cong T(Y')\), in particular this implies that \(D({A(\mathcal{A}_Y)})\) and \(D({A(\mathcal{A}_{Y'})})\) are isometric. From the fact that \(l(D({A(\mathcal{A}_Y)}))=1\) and \(A(\mathcal{A}_Y)\) is indefinite with \(\rk A(\mathcal{A}_Y)=3\), we can apply \autoref{lem:extension_to_Hodge_isom2} to get a Hodge isometry \(\widetilde{H}(\mathcal{A}_Y,\bbZ)\cong\widetilde{H}(\mathcal{A}_{Y'},\bbZ) \).
\end{proof}

\begin{remark}
   Note that, according to a conjecture of Huybrechts \cite[Conjecture 3.21]{huybrechts2023geometry}, if two cubic fourfolds are FM partners then they should be birational. 
A well-known counterexample to the converse is given by two general Pfaffian cubic fourfolds, i.e.\ two general cubic fourfolds in the divisor $\mathcal{C}_{14}$. In fact, any two Pfaffian cubic fourfolds are birational since they are both rational by \cite{beauville1985variete}, but they are in general not FM partners by \cite[Theorem 1.5(iii)]{huybrechts2017k3}. 
The same example also shows that birationality does not imply L-equivalence for cubic fourfolds, by virtue of \autoref{thm:FM_equiv}.
\end{remark}

\section{Hyper-Kähler manifolds}\label{sec:hyper-Kähler}
A hyper-Kähler manifold \(X\) will be a compact Kähler manifold which is simply connected and such that \(H^0(X,\Omega^2_X)=\sigma_X\cdot \bbC\) for an everywhere nondegenerate holomorphic 2-form \(\sigma_X\). Since L-equivalence is for algebraic varieties, we will consider projective hyper-Kähler manifolds. In this case \(H^2(X,\bbZ)\) is a torsion-free polarized Hodge structure of K3 type of weight \(2\), where the lattice structure is given by the Beauville--Bogomolov--Fujiki form of signature \((3,b_2(X)-3)\). The algebraic part coincides with the Néron--Severi lattice \(\NS(X)=H^{1,1}(X)\cap H^2(X,\bbZ)\) and it transcendental part is given by \(T(X)=\NS(X)^\perp\), in analogy to the case of K3 surfaces (see for example \cite[Lemma 3.1]{huybrechts2016lectures}). Also, since \(X\) is projective, the signature of \(\NS(X)\) is of type \((1,-)\) \cite[Proposition 26.13]{gross2012calabi}. One can refer for example to \cite{gross2012calabi,debarre2022hyper} for generalities about hyper-Kähler manifolds.

Manifolds of \textit{K3\(^{[n]}\)} type are hyper-Kähler manifolds that are deformation equivalent to the Hilbert scheme of \(n\) points on a K3 surface \cite{beauville1983varietes}. Manifolds of \textit{OG10 type} are hyper-Kähler manifolds that are deformation equivalent to the symplectic resolution of a moduli spaces of sheaves on a K3 surfaces as in \cite{OG10}.

\begin{lemma}\label{lem:disc_same_deformation_disc_p}
Let \(X\) and \(X'\) be L-equivalent projective hyper-Kähler manifolds of the same deformation type such that \(\disc H^2(X,\bbZ)=p\) is a prime, then \(\disc T(X)=c\cdot \disc T(X')\) with \(c\in \{1,p,\frac{1}{p}\}\).
\end{lemma}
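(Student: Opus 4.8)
The plan is to distill from L-equivalence a single identity between the products \(\disc\NS\cdot\disc T\) for \(X\) and \(X'\), and then to use the primality of \(\disc H^2(X,\bbZ)\) to show that for each manifold the ratio \(\disc\NS/\disc T\) is forced to equal \(p^{\pm1}\); combining the two gives the three allowed values of \(c\). For the input from L-equivalence: since \(\Hdg_{\bbZ}\) factors through the localization, L-equivalence yields \([H^2(X,\bbZ)]=[H^2(X',\bbZ)]\in \HS_{\bbZ,2}\), and applying the homomorphism \(\mathcal{G}\colon K_0(\HS_{\bbZ,2})\to\bbQ^*\) gives \(|G(H^2(X,\bbZ))|=|G(H^2(X',\bbZ))|\). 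As \(X\) and \(X'\) are of the same deformation type, \(H^2(X,\bbZ)\cong H^2(X',\bbZ)\) as lattices, so \(\disc H^2(X',\bbZ)=p\) as well. Substituting both into \(|G(H)|^2=\disc(A(H))\disc(T(H))/\disc(H)\) and cancelling the common factor \(p\) yields
\[\disc\NS(X)\cdot\disc T(X)=\disc\NS(X')\cdot\disc T(X').\]

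The core step is the lattice-theoretic claim that a Hodge lattice \(H\) of K3 type with \(\disc H=p\) prime satisfies \(\disc A(H)/\disc T(H)\in\{p,1/p\}\). Here I would view \(H\) as the overlattice of \(A(H)\oplus T(H)\) attached to the isotropic gluing subgroup \(G:=G(H)\subset D(A(H))\oplus D(T(H))\), so that \(D(H)\cong G^\perp/G\cong\bbZ/p\bbZ\), and use that \(G\) injects into each of \(D(A(H))\) and \(D(T(H))\). Decomposing into \(\ell\)-primary parts, for \(\ell\neq p\) the vanishing of \(D(H)_\ell\) makes \(G_\ell\) its own orthogonal complement, so \(|G_\ell|^2=|D(A(H))_\ell|\,|D(T(H))_\ell|\); combined with the two injections \(|G_\ell|\le|D(A(H))_\ell|\) and \(|G_\ell|\le|D(T(H))_\ell|\) this forces \(|D(A(H))_\ell|=|D(T(H))_\ell|\). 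At \(\ell=p\), writing the orders of \(D(A(H))_p,\,D(T(H))_p,\,G_p\) as \(p^\alpha,p^\tau,p^\gamma\), the relation \(|G_p^\perp|=p|G_p|\) together with \(|G_p^\perp|\,|G_p|=p^{\alpha+\tau}\) gives \(\alpha+\tau=1+2\gamma\); the bounds \(\gamma\le\alpha\) and \(\gamma\le\tau\) yield \(|\alpha-\tau|\le1\), while the oddness of \(\alpha+\tau\) forces \(|\alpha-\tau|=1\), so \(\disc A(H)/\disc T(H)=p^{\alpha-\tau}\in\{p,1/p\}\).

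Applying this claim to \(H^2(X,\bbZ)\) and \(H^2(X',\bbZ)\), I write \(\disc\NS(X)/\disc T(X)=p^{s}\) and \(\disc\NS(X')/\disc T(X')=p^{s'}\) with \(s,s'\in\{\pm1\}\); substituting into the displayed identity gives \((\disc T(X)/\disc T(X'))^2=p^{\,s'-s}\), hence \(\disc T(X)=c\cdot\disc T(X')\) with \(c=p^{(s'-s)/2}\in\{1,p,1/p\}\), as claimed. I expect the local computation of the previous paragraph to be the main obstacle: the clean dichotomy \(\disc A(H)/\disc T(H)\in\{p,1/p\}\) really relies on both the isotropy of \(G\) inside \(D(A(H))\oplus D(T(H))\) and the two separate embeddings of \(G\) into the factors, going beyond what the order formula \eqref{eq:order_glueing} alone provides.
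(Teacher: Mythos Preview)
Your proof is correct and follows the same overall strategy as the paper: extract \(|G(X)|=|G(X')|\) from L-equivalence via the homomorphism \(\mathcal{G}\), then show that for a K3-type Hodge lattice \(H\) with \(\disc H=p\) prime one has \(\disc A(H)/\disc T(H)\in\{p,1/p\}\) (equivalently, \(\disc T(H)\in\{|G(H)|,\,p|G(H)|\}\)), and combine.

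The only difference is in how you establish that dichotomy. The paper argues in one line: setting \(m:=|G(X)|\), the embeddings of \(G(X)\) into \(D(T(X))\) and \(D(\NS(X))\) give \(m\mid\disc T(X)\) and \(m\mid\disc\NS(X)\); writing \(\disc T(X)=ma\), \(\disc\NS(X)=mb\) and plugging into \(pm^{2}=\disc T(X)\cdot\disc\NS(X)\) yields \(ab=p\), hence \(\{a,b\}=\{1,p\}\). Your \(\ell\)-primary analysis (using \(|G_\ell|\cdot|G_\ell^\perp|=|D(A)_\ell|\cdot|D(T)_\ell|\) and the injections \(G_\ell\hookrightarrow D(A)_\ell,\,D(T)_\ell\)) reaches the same conclusion, and in fact gives the finer statement that \(|D(A(H))_\ell|=|D(T(H))_\ell|\) for all \(\ell\neq p\); but this extra information is not needed here, so the paper's cardinality argument is the more economical route.
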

\begin{proof}
    We have \(m=|G(X)|=|G(X')|\), moreover \(m|\disc T(X)\) and \(m|\disc \NS(X)\) and similarly for \(X'\). Observing that from (\ref{eq:order_glueing}) \[p m^2= \disc T(X) \cdot \disc \NS(X)\] we deduce that \(\disc T(X)\in \{m, pm\}\), similarly for \(X'\).
\end{proof}

\begin{lemma}\label{lem:big_transcendental}
    Let \(X\) and \(X'\) be L-equivalent projective hyper-Kähler manifolds of the same deformation type and assume that \(\disc H^2(X,\bbZ)=p\) a prime number, \(\End(T(X))=\bbZ\) and \(\rk T(X)\not= 4\). If there is a primitive embedding \(H^2(X,\bbZ)\subseteq \Lambda\) in a unimodular lattice \(\Lambda\) with \(2 \rk T(X)>\rk \Lambda\), then there is a Hodge isometry \(T(X)\cong T(X')\).
\end{lemma}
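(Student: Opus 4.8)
The plan is to reduce the statement to the lattice comparison carried out for cubic fourfolds and to pin down the scaling factor produced by \autoref{prop:meinsma} using the unimodular overlattice \(\Lambda\). First I would extract the Hodge-theoretic consequence of L-equivalence. Since \(X\) and \(X'\) are L-equivalent, the classes \([H^2(X,\bbZ)]\) and \([H^2(X',\bbZ)]\) agree in \(\HS_{\bbZ,2}\), and applying the transcendental endofunctor \(\mathcal{T}\) yields \([T(X)]=[T(X')]\). The lattices \(T(X)\) and \(T(X')\) are transcendental parts of Hodge structures of K3 type, hence irreducible Hodge lattices of K3 type, so the hypotheses of \autoref{prop:meinsma} are in force. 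Combining \(\End(T(X))=\bbZ\) with \cite[Theorem 2.3]{efimov2018some} I obtain an isomorphism of Hodge structures \(T(X)\cong T(X')\), which \autoref{prop:meinsma} upgrades to a Hodge isometry \(T(X)\cong T(X')(q)\) for some \(q\in\bbQ^{*}\).

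It then remains to prove that \(q=\pm1\). Comparing Gram determinants gives \(\disc T(X)=|q|^{\rk T(X)}\,\disc T(X')\), while \autoref{lem:disc_same_deformation_disc_p} (applicable because \(\disc H^2(X,\bbZ)=p\) is prime) forces the ratio \(\disc T(X)/\disc T(X')\) to lie in \(\{1,p,\frac{1}{p}\}\); in particular \(|q|^{\rk T(X)}\in\{1,p,\frac{1}{p}\}\). The scaling is then controlled by the embedding geometry: \(T(X)\) sits primitively in \(H^2(X,\bbZ)\) as \(\NS(X)^{\perp}\), and since \(X\) and \(X'\) are deformation equivalent there is an isometry \(H^2(X,\bbZ)\cong H^2(X',\bbZ)\) carrying the primitive sublattice \(T(X')\) into \(H^2(X,\bbZ)\). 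Composing with the primitive embedding \(H^2(X,\bbZ)\subseteq\Lambda\) realizes both \(T(X)\) and a rescaling of \(T(X')\) as primitive sublattices of the unimodular lattice \(\Lambda\). Since \(2\rk T(X)>\rk\Lambda\), \autoref{lem:lenght_discriminant} forbids any nontrivial integral rescaling of such a primitive sublattice, which together with the discriminant constraint above and the rationality of \(q\) should force \(|q|=1\).

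The hard part is exactly this last step: a priori \(q\) is merely rational, whereas \autoref{lem:lenght_discriminant} is stated for integral scalings \(T(n)\). So I must first clear denominators and reduce the Hodge isometry \(T(X)\cong T(X')(q)\) to an honest integral rescaling of a common primitive sublattice of \(\Lambda\) before the length-of-discriminant bound applies; the constraint \(|q|^{\rk T(X)}\in\{1,p,\frac{1}{p}\}\), combined with primality of \(p\) and rationality of \(q\), is what I expect to eliminate the non-integral and the \(p\)-divisible cases and leave only \(|q|=1\).

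Once \(q=\pm1\) is established I have \(\disc T(X)=\disc T(X')\) and the Hodge isometry reads \(T(X)\cong T(X')(\pm1)\). Since \(\rk T(X)\neq4\) and the signature of \(T(X)\) is \((2,\rk T(X)-2)\), \autoref{rmk:signatures} rules out the sign \(-1\), and I conclude with a Hodge isometry \(T(X)\cong T(X')\).
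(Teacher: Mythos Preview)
Your proof is correct and follows the same route as the paper. The point you flag as the ``hard part'' is handled there in one line: after obtaining \(T(X)\cong T(X')(q)\) with \(q\in\{\pm1,\pm p,\pm\tfrac{1}{p}\}\), the paper simply swaps \(X\) and \(X'\) if necessary so that \(q\in\bbZ\), and then \autoref{lem:lenght_discriminant} applies verbatim. In fact your own constraint \(|q|^{\rk T(X)}\in\{1,p,\tfrac{1}{p}\}\), together with \(p\) prime, \(q\in\bbQ\) and \(\rk T(X)\geq 2\), already forces \(|q|=1\) directly (a prime has no nontrivial rational \(r\)-th root), so no clearing of denominators is needed and the passage through \autoref{lem:lenght_discriminant} is lighter than you fear.
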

\begin{proof}
     Since \(\End(T(M))=\bbZ\), then by \autoref{lem:anti_isom} together with \autoref{lem:disc_same_deformation_disc_p} there is a Hodge isometry \(T(X)\cong T(X')(q)\) for \(q\in\{\pm 1, \pm p, \pm \frac{1}{p}\}\). Up to exchanging \(X\) and \(X'\), we can assume \(q\in \bbZ\). From the hypothesis that there is a primitive embedding \(H^2(X,\bbZ)\subseteq \Lambda\) in a unimodular lattice \(\Lambda\) and \(2 \rk T(X)>\rk \Lambda\), we can apply \autoref{lem:lenght_discriminant} to deduce that \(q=\pm 1\). Since \(X\) and \(X'\) are projective, then \(T(X)\) and \(T(X')\) have signature \((2,-)\) and by \autoref{rmk:signatures} with \(\rk T(X)\not=4 \) we get a Hodge isometry \(T(X)\cong T(X')\). 
\end{proof}

\begin{remark}\label{rmk:lattices_for_defos}
We recall the following:
\begin{enumerate}
    \item  For \(X\) of OG10 type, we have that \(H^2(X,\bbZ)\cong U^{\oplus 3}\oplus E_8(-1)^{\oplus 2}\oplus A_2(-1)\) as lattices. In particular, \(\disc H^2(X,\bbZ)=3\) and there is a primitive embedding in the unimodular lattice \(\Lambda=U^{\oplus 5}\oplus E_8(-1)^{\oplus 2}\) of rank \(26\). Hence, \autoref{lem:big_transcendental} can be applied for \(\rk T(X)>13\).
    \item  For \(X\) of K3\(^{[2]}\) type, we have that \(H^2(X,\bbZ)\cong U^{\oplus 3}\oplus E_8(-1)^{\oplus 2}\oplus [-2]\) as lattices. In particular, \(\disc H^2(X,\bbZ)=2\) and there is a primitive embedding in the unimodular lattice \(\Lambda=U^{\oplus 4}\oplus E_8(-1)^{\oplus 2}\) of rank \(24\). Hence, \autoref{lem:big_transcendental} can be applied for \(\rk T(X)>12\).
\end{enumerate}
   
\end{remark}

\subsection{Hyper-Kähler manifolds from a cubic fourfold}

\subsubsection{The Fano variety of lines}
We firstly consider the Fano variety of lines \(F(Y)\) associated to a cubic fourfold \(Y\), this is a projective hyper-Kähler manifold of K3\(^{[2]}\) type \cite{beauville1985variete}.

\begin{proposition}\label{lem:L-equiv_Fano_of_lines}
    Let \(Y\) and \(Y'\) be L-equivalent cubic fourfolds, then their Fano varieties of lines \(F(Y)\) and \(F(Y')\) are L-equivalent.
\end{proposition}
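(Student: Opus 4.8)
The plan is to reduce the claim to a relation in $K_0(\Var_\bbC)$ due to Galkin and Shinder. For a smooth cubic fourfold $Y$ their formula reads
\[
[Y^{[2]}]=(1+\bbL^{4})[Y]+\bbL^{2}[F(Y)],
\]
where $Y^{[2]}$ is the Hilbert scheme of length-two subschemes. Subtracting the same identity for $Y'$ gives
\[
\bbL^{2}\bigl([F(Y)]-[F(Y')]\bigr)=\bigl([Y^{[2]}]-[(Y')^{[2]}]\bigr)-(1+\bbL^{4})\bigl([Y]-[Y']\bigr).
\]
By hypothesis there is $r$ with $\bbL^{r}([Y]-[Y'])=0$, so the last term is killed by $\bbL^{r}$. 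Hence it suffices to show that $Y^{[2]}$ and $(Y')^{[2]}$ are themselves L-equivalent: multiplying the displayed identity by a suitable power of $\bbL$ then yields $\bbL^{N}([F(Y)]-[F(Y')])=0$.

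To compare the Hilbert squares I would pass to symmetric squares. Stratifying $Y^{[2]}$ according to whether a length-two subscheme is reduced or concentrated at a point gives
\[
[Y^{[2]}]=[\mathrm{Sym}^{2}Y]+(\bbL+\bbL^{2}+\bbL^{3})[Y],
\]
the last summand being the class of the $\bbP^{3}$-bundle of tangent directions on $Y$. Since this extra term is an $\bbL$-multiple of $[Y]$ and $[Y]-[Y']$ is $\bbL$-power torsion, the problem reduces to proving that $[\mathrm{Sym}^{2}Y]-[\mathrm{Sym}^{2}Y']$ is annihilated by a power of $\bbL$.

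This last step is where I expect the real obstacle, since $\mathrm{Sym}^{2}$ is not a polynomial operation on classes and must be treated through the power (pre-$\lambda$) structure on $K_0(\Var_\bbC)$, for which the symmetric powers assemble into $\sigma_{t}([X])=\sum_{n}[\mathrm{Sym}^{n}X]\,t^{n}$ and satisfy $\sigma_{t}(a+b)=\sigma_{t}(a)\sigma_{t}(b)$ and $\sigma_{t}(\lambda a)=\sigma_{t}(a)^{\lambda}$. Writing $\epsilon=[Y]-[Y']$, additivity gives $[\mathrm{Sym}^{2}Y]-[\mathrm{Sym}^{2}Y']=[Y']\epsilon+\sigma_{2}(\epsilon)$, and the first term dies after multiplication by $\bbL^{r}$. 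For the second, the identity $\sigma_{t}(\epsilon)^{\bbL^{r}}=\sigma_{t}(\bbL^{r}\epsilon)=\sigma_{t}(0)=1$ forces the coefficient of $t^{2}$ to vanish, which reads $\bbL^{r}\sigma_{2}(\epsilon)+c\,\epsilon^{2}=0$ for a fixed class $c$; multiplying by $\bbL^{r}$ and using $\bbL^{r}\epsilon^{2}=0$ yields $\bbL^{2r}\sigma_{2}(\epsilon)=0$. Combining these estimates shows that $Y^{[2]}$ and $(Y')^{[2]}$ are L-equivalent, and hence so are $F(Y)$ and $F(Y')$. The delicate point to verify carefully is the legitimacy of these power-structure manipulations in $K_0(\Var_\bbC)$; if a ready-made statement that Hilbert schemes of points preserve L-equivalence is available, one may cite it instead and skip this computation.
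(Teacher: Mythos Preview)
Your approach coincides with the paper's: both invoke the Galkin--Shinder relation (where the coefficient of $[Y]$ should be $[\bbP^4]=1+\bbL+\bbL^2+\bbL^3+\bbL^4$ rather than your $1+\bbL^4$, a harmless slip) and reduce to showing that $Y^{[2]}$ and $(Y')^{[2]}$ are L-equivalent, which the paper dispatches by citing \cite[Lemma~2.1]{okawa2021example}---precisely the ``ready-made statement'' you anticipate in your closing sentence. Your power-structure computation is a correct alternative derivation of that lemma, so nothing is missing.
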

\begin{proof}
   We know by \cite[Lemma 2.1]{okawa2021example} that \(Y^{[2]}\) and \(Y'^{[2]}\) are L-equivalent. Moreover, by \cite[Theorem 5.1]{galkin2014fano} we have that 
   \[\bbL^2([F(Y)]-[F(Y')])=([Y^{[2]}]-[Y'^{[2]}])+[\bbP^4]([Y]-[Y'])\in K_0(\Var_\bbC).\]
   The first part of the statement follows multiplying by \(\bbL^m\) where \(m\) is big enough so that \([Y^{[2]}]-[Y'^{[2]}]\) and \([Y]-[Y']\) are annihilated, recalling that the Grothendieck ring of varieties is commutative.
\end{proof}

In particular, if \(Y\) and \(Y'\) are cubic fourfolds as in \autoref{thm:Fan-Lai} then by \autoref{prop:counterexamples} and \autoref{lem:L-equiv_Fano_of_lines} we get that \(F(Y)\) and \(F(Y')\) are L-equivalent. However, it was already known by \cite[Theorem 4.1]{brooke2024cubic} that \(F(Y)\cong F(Y')\).

 \subsubsection{The LSV manifolds}\label{sec: LSV manifolds}
We now treat the case of projective hyper-Kähler manifolds that are birational to the Laza--Saccà--Voisin (LSV) manifold \(J(Y)\) (or its twisted version \(J^t(Y)\)) for a cubic fourfold \(Y\). These are hyper-Kähler manifolds of OG10 type obtained as compactifications of the fibration (twisted) intermediate Jacobian of hyperplane section of cubic fourfolds, we refer to \cite{laza2017hyper,voisin2016hyper} for their construction.

\begin{proposition}\label{prop:LSV}
    Let \(X\) be a manifold of OG10 type birational to \(J(Y)\) (or \(J^t(Y)\)) for a very general cubic fourfold \(Y\). Let \(T'\) be a cubic fourfold and \(X'\) be another manifold of OG10 type birational to \(J(Y')\) (or \(J^t(Y')\)). If \(X\) is L-equivalent to \(X\), then \(Y\cong Y'\) and \(X\) is birational to \(X'\).
\end{proposition}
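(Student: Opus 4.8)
The plan is to push the L-equivalence of the OG10 manifolds down to the underlying cubics through their transcendental lattices, invoke the Torelli theorem for cubic fourfolds to obtain \(Y\cong Y'\), and then deduce birationality of \(X\) and \(X'\) from the functoriality of the LSV construction. The essential geometric input I would quote is the lattice-theoretic comparison between an intermediate Jacobian fibration and its cubic: for any cubic fourfold \(Z\), the OG10 manifold \(J(Z)\) (and likewise \(J^t(Z)\)) carries a weight-two Hodge structure of K3 type whose transcendental part is Hodge isometric, up to a possible sign twist, to the transcendental part \(T(Z)\) of \(H^4(Z,\bbZ)\); this follows from the construction in \cite{laza2017hyper,voisin2016hyper}. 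Since birational hyper-Kähler manifolds have Hodge-isometric second cohomology, this yields Hodge isometries \(T(X)\cong T(Y)(c)\) and \(T(X')\cong T(Y')(c')\) with \(c,c'\in\{\pm1\}\).

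First I would apply \autoref{lem:big_transcendental} to the pair \(X,X'\). Its hypotheses hold: \(X\) and \(X'\) are L-equivalent and both of OG10 type; by \autoref{rmk:lattices_for_defos} we have \(\disc H^2(X,\bbZ)=3\) and a primitive embedding of \(H^2(X,\bbZ)\) into the unimodular lattice \(\Lambda=U^{\oplus5}\oplus E_8(-1)^{\oplus2}\) of rank \(26\); since \(Y\) is very general we have \(T(Y)=H^4_{prim}(Y,\bbZ)\) of rank \(22\), whence \(\rk T(X)=22\), so that \(2\rk T(X)>\rk\Lambda\) and \(\rk T(X)\neq4\); and \(\End(T(X))=\End(T(Y))=\bbZ\) via the comparison isometry together with \autoref{rmk:simple_end_algebra}. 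The lemma then produces a Hodge isometry \(T(X)\cong T(X')\).

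Combining this with the two comparison isometries gives \(T(Y)(c)\cong T(Y')(c')\). Because the transcendental part of \(H^4\) of any cubic has a fixed unbalanced signature (the negative of which would force \(\rk T=4\)), \autoref{rmk:signatures} rules out a mixed sign and forces \(c=c'\), yielding a genuine Hodge isometry \(T(Y)\cong T(Y')\). From here I would argue exactly as in the proof of \autoref{thm:main_very_general_cubic}: since \(Y\) is very general, \(A(Y)=\langle\eta_Y\rangle\) and \(T(Y)=H^4_{prim}(Y,\bbZ)\), so \autoref{lem:extension_to_Hodge_isom1} extends this isometry, after possibly composing with \(-\id\), to a Hodge isometry \(H^4(Y,\bbZ)\cong H^4(Y',\bbZ)\) carrying \(\eta_Y\) to \(\eta_{Y'}\), and the Torelli theorem for cubic fourfolds \cite{voisin1986theoreme} gives \(Y\cong Y'\). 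Finally, since the intermediate Jacobian fibration is canonically attached to the cubic, \(Y\cong Y'\) induces \(J(Y)\cong J(Y')\) and \(J^t(Y)\cong J^t(Y')\); as \(X\) and \(X'\) are birational to these, transitivity of birationality gives that \(X\) and \(X'\) are birational.

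I expect two points to require the most care. The first is the transcendental comparison \(T(J(Z))\cong T(Z)(\pm1)\): making it precise, and in particular pinning down the Tate/sign normalization, is where the geometry of the Lagrangian fibration must be quoted carefully, and it is the input on which the whole descent rests. The second, which I regard as the main obstacle, is the genuinely mixed case in which, say, \(X\) is birational to \(J(Y)\) while \(X'\) is birational to \(J^t(Y')\): even after \(Y\cong Y'\), concluding birationality of \(X\) and \(X'\) requires comparing the untwisted and twisted fibrations of a single cubic, which carry different natural polarizations; one must either restrict to matching constructions or supply an additional birational identification between \(J(Y)\) and \(J^t(Y)\).
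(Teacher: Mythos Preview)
Your approach is essentially the paper's: the same comparison of transcendental lattices, the same invocation of \autoref{lem:big_transcendental} via \autoref{rmk:lattices_for_defos}, and Torelli for cubics to conclude. Two brief remarks on the points you flag. First, the sign in the comparison is fixed in the literature: $T(X)$ is Hodge \emph{anti}-isometric to $T(Y)$ (i.e.\ $T(X)\cong T(Y)(-1)$), so your $c,c'$ are both $-1$ and the signature juggling is unnecessary. Second, your suspicion about the mixed case is exactly right: the paper's proof treats only the matching situation (both $J$ or both $J^t$), and in fact notes immediately after this proposition that $J(Y)$ and $J^t(Y)$ are \emph{not} birational for very general $Y$; the statement is to be read accordingly.
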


\begin{proof}

   We have that \(T(X)\) is Hodge anti-isometric to \(T(Y)\), see \cite{laza2017hyper,Sacca_birationality_twisted} and \cite[Proposition 5.1]{billi2024non}. Since $Y$ is very general we know by \autoref{rmk:simple_end_algebra} that \(\End(T(X))=\bbZ\). In this case we have \(\rk T(X)=22\) and by \autoref{rmk:lattices_for_defos} we can apply \autoref{lem:big_transcendental} to deduce that there is a Hodge isometry \(T(X)\cong T(X')\). In particular, we obtain a Hodge isometry \(T(Y)\cong T(Y')\) and by the Torelli Theorem for cubic fourfolds \cite{voisin1986theoreme} with the fact that \(Y\) and \(Y'\) are very general, we get \(Y\cong Y'\).
    In conclusion, we have that \(X\) and \(X'\) are birational to \(J(Y)\) (or \(J^t(Y)\)).
\end{proof}

We have that if \(Y\) is very general then \(\NS(J(Y))\cong U\) and \(\NS(J^t(Y))\cong U(3)\), moreover there are rational Hodge isometries \(T_\bbQ(J(Y))\cong T_\bbQ(J^t(Y))\cong T_\bbQ(Y)(-1)\). This implies that there exists \(q\in \bbQ\) such that there is a Hodge isometry \(T(J(Y))\cong T(J^t(Y))(q)\), since there is an isometry \(T(J(Y))\cong T(J^t(Y))\cong U^{\oplus 2}\oplus E_8(-1)^{\oplus 2}\oplus A_2(-1)\) we then have a Hodge isometry \(T(J(Y))\cong T(J^t(Y))\). 

We then wonder if \(J(Y)\) and \(J^t(Y)\) can be L-equivalent, and notice that by \cite[Corollary 3.10]{Sacca_birationality_twisted} they are not birational.

\subsubsection{Moduli spaces on the Kuznetsov component}
We finally treat the case of hyper-Kähler manifolds that are constructed from moduli spaces of Bridgeland (semi-)stable objects in the Kuznetsov component \(\mathcal{A}_Y\) of a cubic fourfold \(Y\).

The first stability condition on \(\mathcal{A}_Y\) was constructed in \cite{BLMS}. We distinguish the connected component $\Stab^{\dagger}(\mathcal{A}_Y)\subset\Stab(\mathcal{A}_Y)$ of the manifold of stability conditions containing such a condition, and for 
a Mukai vector $v\in \widetilde{H}(\mathcal{A}_Y, \bbZ)$ and a stability condition $\sigma\in\Stab^{\dagger}(\mathcal{A}_Y)$, denote by $M_v(\sigma)$ the moduli space of $\sigma$-semistable objects in $\mathcal{A}_Y$ with Mukai vector $v\in A(\mathcal{A}_Y)$. We also recall that there are always two algebraic classes \(\lambda_1,\lambda_2\in A(\mathcal{A}_Y)\) spanning a primitive sublattice isometric to \(A_2\), and when \(Y\) is very general we have \(A(\mathcal{A}_Y)\cong A_2\).
\begin{proposition}[{\cite{BLMNPS,LPZ_OG10}}]
    Let \(v\in \widetilde{H}(\mathcal{A}_Y, \bbZ)\) and \(\sigma \in \Stab^\dagger(\mathcal{A}_Y)\) be a $v$-generic stability condition.

    \begin{enumerate}
    \item If \(v\) is primitive, then \(M_v(\sigma)\) is hyper-Kähler of K3\(^{[n]}\) type with \(n=\frac{v^2+2}{2}\).
    \item If \(v=2v_0\) with \(v_0\) primitive and \(v_0^2=2\), then $M_v(\sigma)$ admits a symplectic resolution $\widetilde{M}_v(\sigma)$ via blow-up which is a hyperkähler manifold of OG10 type.
\end{enumerate}
\end{proposition}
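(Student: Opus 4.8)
The plan is to reduce the statement to the corresponding, and by now classical, results for Bridgeland moduli spaces on a genuine K3 surface, and then to propagate these through the whole family of cubic fourfolds by a deformation argument. The key structural input is that \(\mathcal{A}_Y\) is a \(2\)-Calabi--Yau category whose Mukai lattice \(\widetilde{H}(\mathcal{A}_Y,\bbZ)\) and numerical theory behave exactly like those of \(D^b(S)\) for a K3 surface \(S\): for cubic fourfolds in a Hassett divisor \(\mathcal{C}_d\) with \(d\) admitting an associated K3 surface there is an exact equivalence \(\mathcal{A}_Y\simeq D^b(S)\) (Kuznetsov \cite{kuznetsov2009derived}, Addington--Thomas \cite{AT}), under which the distinguished component \(\Stab^\dagger(\mathcal{A}_Y)\), the Mukai vector \(v\), and the genericity of \(\sigma\) all match the corresponding data on \(S\). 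Over this locus the moduli space \(M_v(\sigma)\) is therefore literally a moduli space of Bridgeland semistable objects on a K3 surface.

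On the K3 side I would invoke the theory of Bayer--Macrì: for a primitive Mukai vector \(v\) with \(v^2\geq -2\) and a \(v\)-generic stability condition, \(M_v(\sigma)\) is a smooth projective hyper-Kähler manifold of K3\(^{[n]}\) type of dimension \(v^2+2\), whence \(n=\frac{v^2+2}{2}\); this settles item (1) over the special locus. For item (2), with \(v=2v_0\) and \(v_0^2=2\), the moduli space is singular of dimension \(10\) along its strictly semistable locus, so O'Grady's construction --- extended to the Bridgeland setting in \cite{LPZ_OG10} --- produces a symplectic resolution \(\widetilde{M}_v(\sigma)\) by a single blow-up, which one identifies as being of OG10 type by comparison with \cite{OG10}.

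Finally I would globalize using the machinery of stability conditions in families from \cite{BLMNPS}: the components \(\Stab^\dagger(\mathcal{A}_Y)\) and the moduli stacks \(M_v(\sigma)\) assemble into relative objects over (an étale cover of) the moduli space \(\mathcal{C}\) of cubic fourfolds, and the relative moduli space is proper and smooth (respectively, admits a relative symplectic resolution) with fibres the \(M_v(\sigma)\) (respectively \(\widetilde{M}_v(\sigma)\)). Since \(\mathcal{C}\) is irreducible and the divisors with an associated K3 are dense, the fibre over an arbitrary \(Y\) is deformation equivalent to a fibre over the special locus; deformation type, smoothness, projectivity and the holomorphic symplectic form are all preserved in smooth proper families, so the conclusions of items (1) and (2) hold for every \(Y\), and the dimension formula, being locally constant, is read off from the special fibre.

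The main obstacle I anticipate is exactly the family statement underlying this last step: one must construct Bridgeland stability conditions on \(\mathcal{A}_Y\) that vary continuously, with openness of stability and the requisite flatness and properness, over the entire family, and prove that the resulting relative moduli spaces are proper and that their smooth locus carries a relative symplectic form --- this is the substantial technical content of \cite{BLMNPS} (and, for the resolution, of \cite{LPZ_OG10}). Establishing nonemptiness and the precise deformation type of the fibres, rather than merely their smoothness, also rests on connecting every \(Y\) to the K3 locus through \(v\)-generic stability conditions without crossing a wall that would alter the birational type or introduce singularities in item (1).
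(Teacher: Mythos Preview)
The paper does not prove this proposition at all: it is stated with attribution to \cite{BLMNPS,LPZ_OG10} and used as a black box, so there is no ``paper's own proof'' to compare against. Your sketch is a faithful high-level summary of the strategy actually carried out in those references --- anchor at the K3 locus via the equivalence \(\mathcal{A}_Y\simeq D^b(S)\), invoke the Bayer--Macr\`i/O'Grady theory there, and propagate using stability in families --- and you have correctly identified that the hard work lies in the relative construction of \(\Stab^\dagger\) and the relative moduli spaces over \(\mathcal{C}\), together with nonemptiness and wall-avoidance. One refinement worth noting: \cite{BLMNPS} do not reduce everything to the K3 case and then deform; much of the smoothness, symplectic form, and projectivity is established directly for moduli on the Kuznetsov component using the \(2\)-Calabi--Yau structure and the tilt-stability construction of \cite{BLMS}, with the K3 locus used chiefly to pin down the deformation type. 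But as a roadmap your outline is sound.
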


    For this reason, we will call a vector $v\in \widetilde{H}(\mathcal{A}_Y, \bbZ)$ of \emph{O'Grady type} if $v=2v_0$ for a primitive vector $v_0$ with $v_0^2=2$.\newline\newline
    Most of the propositions proved in the previous sections rely on finding some connection between the L-equivalence and Hodge theoretic properties of cubic fourfolds. This is why the following lemma is particularly useful in order to deal with $M_v(\sigma)$ for some $\sigma$ in $\Stab^{\dagger}(\mathcal{A}_Y)$ and $v\in\widetilde{H}(Y,\bbZ)$.

\begin{lemma}\label{trascendente_Kuz}
    Let $\sigma$ be a $v$-generic stability condition in $\Stab^{\dagger}(\mathcal{A}_Y)$ for either a primitive or a O'Grady type Mukai vector $v\in\widetilde{H}(Y,\bbZ)$. Then $T(M_v(\sigma))\simeq T(Y) (-1)$.
\end{lemma}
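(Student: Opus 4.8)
The plan is to compute the transcendental part of the moduli space $M_v(\sigma)$ by relating it to the transcendental part of the Mukai-type lattice $\widetilde{H}(\mathcal{A}_Y,\bbZ)$, and then to invoke the known Hodge isometry $T(Y)(-1)\cong T(\mathcal{A}_Y)$ recorded earlier in the excerpt (from \cite{Addington_rationality}, \cite{AT}). The key structural input is the theory of the Mukai Hodge isometry for moduli spaces on the Kuznetsov component: for a $v$-generic stability condition $\sigma\in\Stab^\dagger(\mathcal{A}_Y)$ and a Mukai vector $v$ which is either primitive or of O'Grady type, there is a Hodge isometry between $H^2$ of the moduli space (respectively of its symplectic resolution in the OG10 case) and the orthogonal complement $v^\perp$ inside $\widetilde{H}(\mathcal{A}_Y,\bbZ)$. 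This is the analogue for cubic fourfolds of the classical O'Grady--Yoshioka result for moduli of sheaves on K3 surfaces, and it is established in \cite{BLMNPS} in the primitive case and in \cite{LPZ_OG10} in the O'Grady case.

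First I would fix the Mukai Hodge isometry $H^2(M_v(\sigma),\bbZ)\cong v^\perp\subset\widetilde{H}(\mathcal{A}_Y,\bbZ)$ in the primitive case, and $H^2(\widetilde{M}_v(\sigma),\bbZ)\cong v^\perp$ in the O'Grady case, where $v^\perp$ denotes the orthogonal complement taken inside $\widetilde{H}(\mathcal{A}_Y,\bbZ)$. The crucial observation is that this isometry is one of Hodge structures, so it identifies the transcendental parts. Since $v\in A(\mathcal{A}_Y)$ is an algebraic class, it is orthogonal to the whole transcendental lattice $T(\mathcal{A}_Y)$; hence $T(\mathcal{A}_Y)\subseteq v^\perp$, and in fact $T(\mathcal{A}_Y)$ is precisely the transcendental part of $v^\perp$ because passing to the orthogonal complement of an algebraic class only alters the algebraic part and leaves the $(p,q)$-decomposition with $p\neq q$ untouched. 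Therefore $T(M_v(\sigma))\cong T(v^\perp)\cong T(\mathcal{A}_Y)$ as Hodge lattices.

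Next I would chain this with the identification $T(\mathcal{A}_Y)\cong T(Y)(-1)$, which yields the desired Hodge isometry $T(M_v(\sigma))\cong T(Y)(-1)$. I would treat the primitive and O'Grady cases uniformly, the only difference being that in the OG10 case one works with the symplectic resolution $\widetilde{M}_v(\sigma)$ and uses the fact that the blow-up does not change the transcendental lattice (the exceptional divisor is algebraic), so that $T(\widetilde{M}_v(\sigma))\cong T(M_v(\sigma))$ still computes as $T(v^\perp)$.

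I expect the main obstacle to be the careful bookkeeping of the sign (the $(-1)$ twist) and the precise form of the Mukai Hodge isometry in the OG10 setting, where the target is the orthogonal complement $v_0^\perp$ (equivalently $v^\perp$, since $v=2v_0$ and $v_0,v$ span the same $\bbQ$-line) and one must confirm that the resolution preserves the Hodge-theoretic transcendental part. Verifying that $v^\perp$ and $T(\mathcal{A}_Y)$ have the same transcendental part is itself routine once one notes $v$ is algebraic, but one should be slightly careful that the orthogonal complement is taken primitively so that no transcendental classes are lost. The compatibility of the O'Grady--Yoshioka-type isometry with the Hodge structure in the cubic-fourfold setting is the genuinely non-trivial ingredient, and I would cite \cite{BLMNPS,LPZ_OG10} for it rather than reprove it.
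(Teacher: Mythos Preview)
Your proposal is correct and follows essentially the same approach as the paper's own proof: both use the Mukai-type Hodge isometry $H^2(M_v(\sigma),\bbZ)\cong v^\perp\subset\widetilde{H}(\mathcal{A}_Y,\bbZ)$ (the paper cites \cite{GGO,BLMNPS} rather than \cite{LPZ_OG10} for the O'Grady case), observe that $v$ algebraic implies the transcendental part of $v^\perp$ is $T(\mathcal{A}_Y)$, invoke $T(\mathcal{A}_Y)\cong T(Y)(-1)$, and handle the OG10 resolution by noting the blow-up leaves the transcendental lattice unchanged.
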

\begin{proof}
    We know by \cite[Proposition 2.8]{GGO} and \cite[Theorem 1.6]{BLMNPS} that in both cases there exists a Hodge isometry $H^2(M_v(\sigma), \mathbb{Z})\simeq
    v^{\perp}\subset \widetilde{H}(\mathcal{A}_Y, \mathbb{Z})$. Moreover, by \cite[Proposition 2.8]{AT} there exists another Hodge isometry $\langle\lambda_1,\lambda_2\rangle^\perp=
    A_2^{\perp}\simeq H^4_{prim}(Y, \mathbb{Z})(-1)$ where the first orthogonal complement is taken in the Mukai lattice of $\mathcal{A}_Y$. As $v\in\widetilde{H}(\mathcal{A}_Y, \bbZ)$ lies in the algebraic part we deduce from both the isometries that $T(M_v(\sigma))\simeq T(Y)(-1)$. When \(v\) is of O'Grady type, then $\widetilde{M}_v(\sigma)$ is given by a blow-up on the singular locus of \(M_v(\sigma)\). By Voisin's blow up formula \cite[Theorem 7.31]{VoisinHodgeTheoryI} we know that the center of the blow-up contributes only to algebraic classes, so that $T(\widetilde{M}_v(\sigma))\simeq T(M_v(\sigma))\simeq T(Y)(-1)$.
\end{proof}
The following lemma tells us that when \(Y\) is very general, a primitive Mukai vector \(v\in A(\mathcal{A}_Y)\) is determined by its square up to isometry.
\begin{lemma}\label{lemma:diofanteo}
    Let $v, w\in A_2$ be two primitive elements with the same square. Then $v^{\perp}\simeq w^{\perp}$ as lattices.
\end{lemma}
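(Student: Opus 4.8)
The plan is to reduce the statement to the uniqueness of primitive embeddings into an even unimodular lattice. Recall that the orthogonal complements $v^\perp$ and $w^\perp$ are taken inside the Mukai lattice $\Lambda:=\widetilde{H}(\mathcal{A}_Y,\bbZ)\cong U^{\oplus 4}\oplus E_8(-1)^{\oplus 2}$, which is even, unimodular, of rank $24$ and signature $(4,20)$, and in which $A_2=A(\mathcal{A}_Y)$ sits as a primitive sublattice of signature $(2,0)$. Since $v$ is primitive in $A_2$ and $A_2$ is primitive in $\Lambda$, the rank-one sublattice $\langle v\rangle$ is primitive in $\Lambda$, and likewise for $\langle w\rangle$. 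As $v^2=w^2=:n>0$, sending a generator to $v$ (resp. to $w$) exhibits both $\langle v\rangle$ and $\langle w\rangle$ as copies of the even rank-one lattice $S:=[n]$ of signature $(1,0)$. It therefore suffices to produce an isometry $g\in O(\Lambda)$ with $g(v)=\pm w$: such a $g$ maps $\langle v\rangle^\perp=v^\perp$ isometrically onto $\langle \pm w\rangle^\perp=w^\perp$.

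To obtain $g$, I would invoke Nikulin's theorem on uniqueness of primitive embeddings into even unimodular lattices \cite[Theorem 1.14.4]{Nikulin_integral}, applied to the two primitive embeddings $\iota_v,\iota_w\colon S\hookrightarrow\Lambda$ sending a fixed generator of $S$ to $v$ and $w$ respectively. The numerical hypotheses are immediate: the discriminant group $D(S)\cong\bbZ/n\bbZ$ has length $l(D(S))=1$, so that $\rk\Lambda-\rk S=23\geq l(D(S))+2=3$, and the signature inequalities $s_+<l_+$ and $s_-<l_-$ read $1<4$ and $0<20$. Hence any two primitive embeddings of $S$ into $\Lambda$ lie in the same $O(\Lambda)$-orbit: there exist $g\in O(\Lambda)$ and $\phi\in O(S)$ with $g\circ\iota_v=\iota_w\circ\phi$. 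Since $O(S)=\{\pm 1\}$ this forces $g(v)=\pm w$, and consequently $g(v^\perp)=w^\perp$, giving the desired isometry of lattices.

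The only genuine content is recognizing the statement as an instance of uniqueness of primitive embeddings and checking that Nikulin's rank and signature conditions hold; once this is in place there is nothing further to compute, so I do not expect a serious obstacle. I note that one could equally well appeal to Eichler's criterion: because $\Lambda$ contains $U^{\oplus 2}$ and is unimodular, so that $D(\Lambda)=0$, the $O(\Lambda)$-orbit of a primitive vector depends only on its square, which again yields an isometry $g\in O(\Lambda)$ with $g(v)=w$ directly.
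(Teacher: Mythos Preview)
Your argument is correct, but you are proving a different statement from the paper. In the paper the orthogonal complement is taken \emph{inside} $A_2$: the proof finds a primitive generator $u=xe_1+ye_2$ of $v^\perp\subset A_2$ and shows by an explicit Diophantine computation that $u^2$ equals $v^2/3$ or $3v^2$ according as $3\mid v^2$ or not, hence depends only on $v^2$. You instead take $v^\perp$ inside the Mukai lattice $\Lambda=U^{\oplus 4}\oplus E_8(-1)^{\oplus 2}$ and invoke Nikulin/Eichler to place $v$ and $w$ in the same $O(\Lambda)$-orbit. That argument is valid (the numerical hypotheses are amply satisfied) and yields $v^\perp_\Lambda\cong w^\perp_\Lambda$ as abstract lattices, but it is not the lemma as the authors prove it.

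The distinction matters for the application. In Proposition~\ref{prop:LEquivalenceKuznetsov} one needs a \emph{Hodge} isometry $v^\perp_\Lambda\cong w^\perp_\Lambda$. The paper's route gives an isometry of $A_2$ carrying $v$ to $w$ (via the lemma together with the surjectivity $O(A_2)\to O(D(A_2))$), which is then glued with the already-established Hodge isometry on $A_2^\perp=T(\mathcal{A}_Y)$ to produce a Hodge isometry of the full Mukai lattice. Your global $g\in O(\Lambda)$ has no reason to preserve the splitting $A_2\oplus A_2^\perp$, so it does not directly give a Hodge isometry; recovering the Hodge statement from your version would still require the $A_2$-level control that the paper's computation provides. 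In short: your proof is a clean and correct shortcut to the lattice statement in $\Lambda$, but the paper's elementary computation in $A_2$ is the sharper result and is what is actually used downstream.
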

\begin{proof}
    Let $v=ae_1+be_2$ for a basis $\{e_1, e_2\}$ of $A_2$. Then a generator of $v^{\perp}$ will be a primitive element $u=xe_1+ye_2$ such that $(2a-b)x+(2b-a)y=0$. We call $p:=(2a-b)$ and $q:=(2b-a)$. The diophantine equation $px+qy=0$ has solutions \begin{equation*}
        x=k\frac{q}{g},\ y=-k\frac{p}{g}, \qquad g:=\gcd(p,q),\quad k\in\bbZ.
    \end{equation*}
    The only primitive solutions are thus given by $k=\pm 1$. In both cases we get \begin{equation*}
        u^2=\frac{2}{g^2}\left(q^2+p^2+pq\right)=\frac{6}{g^2}\left(a^2+b^2-ab\right)=\frac{3}{g^2}v^2.
    \end{equation*}
    Note that $p\equiv q\equiv 2(a+b) \mod 3$. Moreover, $g\mid (2p+q)=3a$ and $g\mid (2q+p)=3b$. As $v$ is primitive we know that $\gcd(a,b)=1$, therefore $g\mid 3$, i.e. $g\in\left\{\pm 1,\pm 3\right\}$. Now, we have the identity $a^2+b^2-ab\equiv(a+b)^2 \mod 3$. Therefore,\begin{equation*}
        0\equiv v^2=2(a^2+b^2-ab) \mod 3\iff 0\equiv a+b\equiv p \equiv q\mod 3\iff g=\pm 3.
    \end{equation*}
    This shows that the square of the primitive element $u$ that generates the orthogonal complement of a primitive vector $v\in A_2$ is $u^2=\frac{v^2}{3}$ if $v^2\equiv 0\mod 3$ or, else, $u^2=3v^2$. In both cases it is uniquely determined by the square of $v$. 
\end{proof}
\begin{remark}
    If $v=kv_0$ for $k\in\bbZ$ and $v_0$ primitive we get that an element $u\in v^{\perp}\iff u\in v_0^{\perp}$, so the same result of \autoref{lemma:diofanteo} holds for $v=kv_0$, $w=kw_0$ with $v_0, w_0$ primitive vectors with the same square.
\end{remark}

Now we are ready to state and prove the following proposition in the same fashion of \autoref{prop:LSV}.

\begin{proposition}\label{prop:LEquivalenceKuznetsov}
    Let $Y$ and $Y'$ be two cubic fourfolds and assume that $Y$ is very general. Let $v\in A(\mathcal{A}_Y)$ and $w\in A(\mathcal{A}_{Y'})$ be either primitive with same square or of O'Grady type. Let $\sigma_Y\in\Stab^{\dagger}(Y)$ and $\sigma_{Y'}\in\Stab^{\dagger}(Y')$ be a $v$-generic and a $w$-generic stability condition, respectively. Suppose that \(M_v(\sigma_Y)\) and \(M_w(\sigma_{Y'})\) are L-equivalent, then $Y\cong Y'$ and \(M_v(\sigma_Y)\) is birational to \(M_w(\sigma_{Y'})\). In case \(M_v(\sigma_Y)\) and \(M_w(\sigma_{Y'})\) are of K3\(^{[n]}\) type, then they are also D-equivalent.
\end{proposition}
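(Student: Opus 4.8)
The plan is to run the argument of \autoref{prop:LSV}, but with the Mukai lattice $\widetilde{H}(\mathcal{A}_Y,\bbZ)$ playing the role of $H^2$, and to finish with the Torelli theorem for cubic fourfolds. First I would record the invariants. Since $Y$ is very general, $A(\mathcal{A}_Y)\cong A_2$ and, by \autoref{rmk:simple_end_algebra}, $\End(T(Y))=\bbZ$ with $T(Y)=H^4_{prim}(Y,\bbZ)$ of rank $22$ and signature $(20,2)$. By \autoref{trascendente_Kuz} there are Hodge isometries $T(M_v(\sigma_Y))\cong T(Y)(-1)$ and $T(M_w(\sigma_{Y'}))\cong T(Y')(-1)$, so $\End(T(M_v(\sigma_Y)))=\bbZ$, $\rk T(M_v(\sigma_Y))=22$, and its signature is $(2,20)$; moreover $T(M_v(\sigma_Y))\cong T(\mathcal{A}_Y)=A(\mathcal{A}_Y)^\perp$ embeds primitively in the unimodular rank-$24$ lattice $\widetilde{H}(\mathcal{A}_Y,\bbZ)$, and similarly for $Y'$. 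Note the two moduli spaces have the same deformation type (K3$^{[n]}$ with $n=\frac{v^2+2}{2}$ in the primitive case, OG10 in the O'Grady case).

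Next I would upgrade L-equivalence to a Hodge isometry of transcendental lattices. L-equivalence gives $[H^2(M_v(\sigma_Y))]=[H^2(M_w(\sigma_{Y'}))]\in\HS_{\bbZ,2}$, hence $[T(M_v(\sigma_Y))]=[T(M_w(\sigma_{Y'}))]$ after applying $\mathcal{T}$. By \cite[Theorem 2.3]{efimov2018some} there is an isomorphism of Hodge structures $T(M_v(\sigma_Y))\cong T(M_w(\sigma_{Y'}))$; in particular $\rk T(Y')=22$, so $\rk A(Y')=1$, $Y'$ is not special, and $\disc T(M_w(\sigma_{Y'}))=\disc T(Y')=3=\disc T(Y)=\disc T(M_v(\sigma_Y))$. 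Then \autoref{lem:anti_isom} gives a Hodge isometry $T(M_v(\sigma_Y))\cong T(M_w(\sigma_{Y'}))(\pm1)$, and the unbalanced signature $(2,20)$ together with \autoref{rmk:signatures} forces the $(+1)$ case. (Equivalently, one can avoid the discriminant count and argue as in \autoref{lem:big_transcendental}: \autoref{prop:meinsma} gives such an isometry up to a factor $q\in\bbQ$, and the primitive embeddings into the unimodular rank-$24$ Mukai lattice with $2\cdot22>24$ force $q=\pm1$ by the estimate in the proof of \autoref{lem:lenght_discriminant}.) Undoing the $(-1)$-twist yields a Hodge isometry $H^4_{prim}(Y,\bbZ)\cong H^4_{prim}(Y',\bbZ)$ which, up to $-\id$, extends by \autoref{lem:extension_to_Hodge_isom1} to a Hodge isometry $H^4(Y,\bbZ)\cong H^4(Y',\bbZ)$ sending $\eta_Y$ to $\eta_{Y'}$; hence $Y\cong Y'$ by \cite{voisin1986theoreme}.

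It remains to prove birationality. The isomorphism $Y\cong Y'$ identifies the Kuznetsov components together with their Mukai lattices, so $v$ and $w$ become two classes of the same square in $A(\mathcal{A}_Y)\cong A_2$. By \autoref{lemma:diofanteo} (and the subsequent remark in the O'Grady case) there is an isometry $v^\perp\cong w^\perp$; as $v$ and $w$ are algebraic, it fixes the common transcendental part $T(\mathcal{A}_Y)$, and after adjusting the sign on the rank-one algebraic part via \autoref{lem:extension_to_Hodge_isom1} it becomes a Hodge isometry $H^2(M_v(\sigma_Y))\cong H^2(M_w(\sigma_{Y'}))$ (and similarly for the OG10 cohomology).

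The hard part will be to promote this Hodge isometry to an actual birational map. For this I would invoke the Torelli theorem for hyper-Kähler manifolds, which reduces the question to checking that the isometry is a parallel transport operator; I expect to obtain this from the fact that it is the identity on the transcendental lattice, combined with the description of the monodromy group in the K3$^{[n]}$ and OG10 deformation types, or equivalently from the birational geometry of Bridgeland moduli spaces under wall-crossing and autoequivalences of $\mathcal{A}_Y$ (cf. \cite{BLMNPS}). Once birationality is established, in the K3$^{[n]}$ case $M_v(\sigma_Y)$ and $M_w(\sigma_{Y'})$ are moreover D-equivalent by \cite{maulik2025d}.
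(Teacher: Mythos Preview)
Your approach is correct and essentially identical to the paper's: identify the transcendental lattices via \autoref{trascendente_Kuz}, use L-equivalence $+$ \autoref{lem:anti_isom} $+$ the signature argument to get a Hodge isometry $T(Y)\cong T(Y')$, apply Torelli for cubic fourfolds, and then compare $v^\perp$ with $w^\perp$ via \autoref{lemma:diofanteo}. Your observation that the Hodge isomorphism forces $\rk T(Y')=22$, hence $A(Y')=\langle\eta_{Y'}\rangle$, is a point the paper glosses over with ``similarly for $Y'$''.

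The only step you leave as an ``expectation'' is the parallel-transport check, and this is exactly what the paper supplies. For OG10 the monodromy group is all of $O^+(H^2)$ by \cite{onorati_monodromy}, so after composing with $-\id$ your Hodge isometry is automatically a parallel transport and \cite[Theorem~1.3]{markman2011survey} gives birationality. For K3$^{[n]}$ the paper invokes \cite[Corollary~9.9]{markman2011survey}, whose hypothesis is precisely that the Hodge isometry $v^\perp\cong w^\perp$ extends to a Hodge isometry of the full Mukai lattice. To guarantee this, it is cleaner to organise the construction as the paper does: rather than gluing $\id$ on $T(\mathcal{A}_Y)$ with an isometry of the rank-one pieces $v^\perp\cap A_2\cong w^\perp\cap A_2$, first produce an isometry of $A_2$ sending $v$ to $w$ (this follows from \autoref{lemma:diofanteo} together with surjectivity of $O(A_2)\to O(D(A_2))$), and then glue it with $\pm\id$ on $T(\mathcal{A}_Y)$ to a Hodge isometry of the whole unimodular Mukai lattice. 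Restricted to $v^\perp$ this gives the Hodge isometry you want, and by construction it satisfies Markman's criterion, yielding birationality and then D-equivalence via \cite{maulik2025d}.
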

\begin{proof}
    By \autoref{trascendente_Kuz} we get that $T(M_v(\sigma_Y))$ is Hodge anti-isometric to $T(Y)$ and $T(M_w(\sigma_{Y}))$ to $T(Y')$. Since $Y$ is very general we know by \autoref{rmk:simple_end_algebra} that \(\End(T(X))=\bbZ\). Since $Y$ is very general, we have \(T(Y)=H_{prim}^4(Y,\bbZ)\) and similarly for \(Y'\). From the fact that \(T(Y)\) and \(T(Y')\) are isometric of signature \((22,2)\), by \autoref{lem:anti_isom} and \autoref{rmk:signatures} we get Hodge isometries $T(Y)\simeq T(M_v(\sigma_Y))(-1)\simeq T(M_w(\sigma_{Y'}))(-1)\simeq T(Y')$. From the first and the last Hodge isometries we get a Hodge isometry \(H^4(Y,\bbZ)\cong H^4(Y',\bbZ)\). The Torelli theorem for cubic fourfolds implies then then $Y\simeq Y'$. Moreover, we have $A(\mathcal{A}_Y)\simeq A(\mathcal{A}_{Y'})\simeq A_2$ and by \cite[Proposition 2.8]{AT} we get Hodge isometries $A_2^{\perp}=T(\mathcal{A}_Y)\cong  H^4_{prim}(Y, \mathbb{Z})(-1)\simeq H^4_{prim}(Y', \mathbb{Z})(-1)\cong T(\mathcal{A}_{Y'})$.

    As $v, w\in A_2$ share the same square and are either primitive or of O'Grady type we deduce from \autoref{lemma:diofanteo}, with the fact that \(O(A_2)\to O(D({A_2}))\) is surjective, that they have isomorphic embeddings in $A_2$. So, we can deduce the following chain of Hodge isometries 
    \begin{equation}\label{eq:isoperpMukai}
    H^2(M_v(\sigma_Y), \mathbb{Z})\simeq v^{\perp}\simeq w^{\perp}\simeq H^2(M_w(\sigma_{Y'}), \mathbb{Z}).
    \end{equation}
    
    If $v$ and $w$ are of O'Grady type then the Hodge isometry of (\ref{eq:isoperpMukai}) induces a Hodge isometry $H^2(\widetilde{M}_v(\sigma_Y))\simeq H^2(\widetilde{M}_w(\sigma_{Y'}))$. Moreover, by \cite{onorati_monodromy} the monodromy group of a manifold of OG10 type \(X\) is given by \(O^+(H^2(X,\bbZ))\), thus, up to composing by \(-\id\) we can assume that the Hodge isometry is monodromy. The Hodge theoretic Torelli theorem for hyper-Kähler manifolds \cite[Theorem 1.3]{markman2011survey} implies that  \(M_v(\sigma_Y)\) is birational to \(M_w(\sigma_{Y'})\). 
    
    If $v$ and $w$ are primitive, then \(M_v(\sigma_Y)\) and \(M_w(\sigma_{Y'})\) are of type $K3^{[n]}$ with \(n=\frac{v^2+2}{2}\). By the previous discussion, the Hodge isometry \(T(\mathcal{A}_Y)\cong T(\mathcal{A}_{Y'})\) extends to a Hodge isometry of the Mukai lattices \(\widetilde{H}(\mathcal{A}_Y,\bbZ)\cong \widetilde{H}(\mathcal{A}_{Y'},\bbZ)\) restricting to (\ref{eq:isoperpMukai}) with the opportune identifications, and hence \cite[Corollary 9.9]{markman2011survey} yields that \(M_v(\sigma_Y)\) is birational to \(M_w(\sigma_{Y'})\). By \cite[Theorem 1.2]{maulik2025d} they are also D-equivalent.
\end{proof}
\begin{remark}
    By \cite[Proposition 6.7]{LPZ_OG10} the OG10 manifold $\widetilde{M}_v(\sigma)$ for $v$ of O'Grady type provides a compactification of the Lagrangian fibration of twisted intermediate Jacobians. Thus, $\widetilde{M}_v(\sigma)$ is birational to the twisted LSV manifold $J^t(Y)$ \cite{voisin2016hyper}. In particular, when \(Y\) is very general, for this choice of $v$, we have that \autoref{prop:LEquivalenceKuznetsov} and \autoref{prop:LSV} are equivalent. 
\end{remark}
The tight relation between the Hodge structures of $M_v(\sigma)$ and the underlying cubic fourfold $Y$ yields also the following.
\begin{corollary}\label{cor:Kuz_special_cubics}
     Suppose that in the hypothesis of \autoref{prop:LEquivalenceKuznetsov} we ask $Y$ to be very general in $\mathcal{C}_d$ for $d$ not divisible by 9. Then \(Y\) and \(Y'\) are FM partners.
\end{corollary}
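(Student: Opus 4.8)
The plan is to reduce the statement to producing a Hodge isometry \(T(Y)\cong T(Y')\) between the transcendental lattices of the two cubics, and then to run verbatim the extension argument in the proof of \autoref{thm:FM_equiv}. Indeed, by \cite[Theorem 1.5 iii)]{huybrechts2017k3} the conclusion ``\(Y\) and \(Y'\) are FM partners'' is equivalent to a Hodge isometry of the Mukai lattices \(\widetilde{H}(\mathcal{A}_Y,\bbZ)\cong\widetilde{H}(\mathcal{A}_{Y'},\bbZ)\), and the proof of \autoref{thm:FM_equiv} shows how to manufacture the latter out of a Hodge isometry \(T(Y)\cong T(Y')\) once \(d\) is not divisible by \(9\).

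First I would collect the transcendental input. By \autoref{trascendente_Kuz} there are Hodge isometries \(T(M_v(\sigma_Y))\cong T(Y)(-1)\) and \(T(M_w(\sigma_{Y'}))\cong T(Y')(-1)\); in particular \(\rk T(M_v(\sigma_Y))=\rk T(Y)=21\) because \(Y\) is very general in \(\mathcal{C}_d\), and \(\End(T(M_v(\sigma_Y)))=\End(T(Y))=\bbZ\) by \autoref{rmk:simple_end_algebra}. The L-equivalence of \(M_v(\sigma_Y)\) and \(M_w(\sigma_{Y'})\) gives \([T(M_v(\sigma_Y))]=[T(M_w(\sigma_{Y'}))]\in K_0(\HS_{\bbZ})\). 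Combining \cite[Theorem 2.3]{efimov2018some} with \autoref{prop:meinsma}, exactly as in the proof of \autoref{lem:anti_isom}, we then obtain a rational Hodge isometry \(T(M_v(\sigma_Y))\cong T(M_w(\sigma_{Y'}))(q)\) for some \(q\in\bbQ\).

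The heart of the argument is to show \(q=1\). Here I cannot invoke \autoref{lem:big_transcendental} directly, since in the K3\(^{[n]}\) case \(\disc H^2(M_v(\sigma_Y))=2(n-1)\) need not be prime. Instead I would pass to the cubic's transcendental lattice: both \(T(M_v(\sigma_Y))\cong T(\mathcal{A}_Y)\) and \(T(M_w(\sigma_{Y'}))\cong T(\mathcal{A}_{Y'})\) have rank \(21\) and embed primitively into the even unimodular Mukai lattice \(\widetilde{H}(\mathcal{A}_Y,\bbZ)\) of rank \(24\). Since \(2\cdot 21=42>24\), \autoref{lem:lenght_discriminant} rules out any nontrivial scaling: writing \(q\) in lowest terms, a scaling by an integer \(k>1\) would force the discriminant group of one of these lattices to contain a copy of \((\bbZ/k\bbZ)^{21}\), of length \(21>24-21=3\), which is incompatible with a primitive embedding into a rank-\(24\) unimodular lattice. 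As the signature of \(T(M_v(\sigma_Y))\) is \((19,2)\), hence unbalanced with rank \(\neq 4\), \autoref{rmk:signatures} fixes the sign, giving \(q=1\) and a Hodge isometry \(T(M_v(\sigma_Y))\cong T(M_w(\sigma_{Y'}))\), that is \(T(Y)\cong T(Y')\).

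Finally I would conclude as in \autoref{thm:FM_equiv}. Since \(d\) is not divisible by \(9\), \(D(T(Y))\) is cyclic, hence so are \(D(T(\mathcal{A}_Y))=D(T(Y)(-1))\) and, by unimodularity of \(\widetilde{H}(\mathcal{A}_Y,\bbZ)\), also \(D(A(\mathcal{A}_Y))\), which has length \(1\). The isometry \(T(Y)\cong T(Y')\) transports to \(T(\mathcal{A}_Y)\cong T(\mathcal{A}_{Y'})\) and identifies the discriminant forms of \(A(\mathcal{A}_Y)\) and \(A(\mathcal{A}_{Y'})\), so these two even indefinite rank-\(3\) lattices lie in the same genus and satisfy \(\rk A(\mathcal{A}_Y)=3\geq l(D(A(\mathcal{A}_Y)))+2\). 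Hence \autoref{lem:extension_to_Hodge_isom2} extends the isometry to a Hodge isometry \(\widetilde{H}(\mathcal{A}_Y,\bbZ)\cong\widetilde{H}(\mathcal{A}_{Y'},\bbZ)\), and by \cite[Theorem 1.5 iii)]{huybrechts2017k3} the cubics \(Y\) and \(Y'\) are FM partners. The main obstacle is the third paragraph: controlling the rational scaling \(q\) without a prime-discriminant hypothesis on the moduli spaces, which is exactly why one must argue with the rank-\(21\) transcendental lattice of the cubic inside the rank-\(24\) unimodular Mukai lattice rather than with \(H^2\) of the moduli space.
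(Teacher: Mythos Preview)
Your argument is correct and follows the same overall strategy as the paper: identify \(T(M_v(\sigma_Y))\cong T(\mathcal{A}_Y)\cong T(Y)(-1)\) via \autoref{trascendente_Kuz}, produce a Hodge isometry \(T(Y)\cong T(Y')\), and then rerun the extension step of \autoref{thm:FM_equiv}. The paper's one-line proof simply asserts this reduction; your third paragraph supplies the substantive step it leaves implicit, namely why the rational scaling \(q\) must be \(\pm1\) when the L-equivalence hypothesis is on the moduli spaces rather than on the cubics themselves. Your use of the primitive embedding of the rank-\(21\) lattice \(T(\mathcal{A}_Y)\) in the rank-\(24\) unimodular Mukai lattice (so that \(l(D(T(\mathcal{A}_Y)))\le 3\), ruling out any \((\bbZ/k\bbZ)^{21}\) summand) is exactly the right mechanism here, and is the natural analogue of \autoref{lem:lenght_discriminant}/\autoref{lem:big_transcendental} in a setting where \(\disc H^2\) of the moduli space need not be prime.

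One cosmetic slip: the signature of \(T(M_v(\sigma_Y))\) is \((2,19)\), not \((19,2)\), since for a projective hyper-Kähler manifold \(T(X)\) has signature \((2,\rk T(X)-2)\); this does not affect your use of \autoref{rmk:signatures}, as the only thing needed is that the signature is unbalanced.
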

\begin{proof}
    The proof of the fact that \(Y\) and \(Y'\) are FM partners is the same of \autoref{thm:FM_equiv}, once we note that by \autoref{lem:anti_isom} we have that $T(Y)(-1)\simeq T(M_v(\sigma))\simeq T(\mathcal{A}_Y)$.
\end{proof}

\subsection{Hyper-Kähler manifolds from K3 surfaces}

\subsubsection{Desingularizations of moduli spaces of sheaves on a K3}

We firstly consider the case where the hyper-Kähler manifolds are birational to desingularizations of moduli spaces of sheaves on K3 surfaces.

\begin{proposition}\label{prop:OG10_moduli_spaces}
Let \(X\) be a manifold of OG10 type with \(\NS(X)=U\oplus A_2(-1)\), \(\disc T(X)=1\) and \(\End(T(X))=\bbZ\). Let \(X'\) be another manifold of OG10 type which is L-equivalent to \(X\), then \(X\) and \(X'\) are birational. Moreover, \(X\) and \(X'\) are D-equivalent.
\end{proposition}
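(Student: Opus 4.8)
The plan is to first establish the birational equivalence by reducing, through the lattice machinery of \autoref{sec:Hodge lattices}, to the Hodge-theoretic Torelli theorem, and then to obtain D-equivalence by realizing both manifolds as desingularized moduli spaces on a common K3 surface. I begin by collecting the numerical invariants: for OG10 type \autoref{rmk:lattices_for_defos} gives \(b_2(X)=24\) and \(\disc H^2(X,\bbZ)=3\), while \(\rk \NS(X)=\rk(U\oplus A_2(-1))=4\), so \(\rk T(X)=20\). As \(20>13\) and \(20\neq 4\), and \(\End(T(X))=\bbZ\) by hypothesis, all the assumptions of \autoref{lem:big_transcendental} are satisfied, and I obtain a Hodge isometry \(T(X)\cong T(X')\). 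In particular \(\disc T(X')=\disc T(X)=1\), so \(T(X')\) is unimodular as well.

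Next I would extend this isometry to all of \(H^2\). Since \(T(X)\) and \(T(X')\) are unimodular they split off orthogonally, so \(G(X)\) and \(G(X')\) are trivial and \(\NS(X),\NS(X')\) are the orthogonal complements of \(T(X),T(X')\) in the fixed lattice \(U^{\oplus 3}\oplus E_8(-1)^{\oplus 2}\oplus A_2(-1)\); hence both have signature \((1,3)\) and discriminant form that of \(A_2(-1)\), so they are even, indefinite, and in the same genus. Because \(\rk \NS(X)=4\geq l(D(\NS(X)))+2=3\), \autoref{lem:extension_to_Hodge_isom2} produces an isometry \(g\colon \NS(X)\cong\NS(X')\) and a Hodge isometry \(h\colon H^2(X,\bbZ)\cong H^2(X',\bbZ)\) restricting to the Hodge isometry on the transcendental parts. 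Using that the monodromy group of OG10 type is \(O^+(H^2(X,\bbZ))\) \cite{onorati_monodromy}, after possibly composing \(h\) with \(-\id\) I may assume it is a monodromy operator, and the Hodge-theoretic Torelli theorem \cite{markman2011survey} then yields that \(X\) and \(X'\) are birational.

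For the D-equivalence I would exploit the hypothesis \(\disc T(X)=1\). The transcendental Hodge structure is then that of a K3 surface \(S\) with unimodular \(T(S)\cong T(X)=U^{\oplus 2}\oplus E_8(-1)^{\oplus 2}\) and \(\NS(S)=U\), and the shape \(\NS(X)=U\oplus A_2(-1)\) is exactly the one produced by O'Grady's construction, the summand \(A_2(-1)\) accounting for the exceptional classes; matching periods via the Torelli theorem, \(X\) is birational to a desingularized moduli space \(\widetilde{M}_v(S)\) with \(v\) of O'Grady type, and likewise \(X'\sim\widetilde{M}_{v'}(S')\). Since \(T(S)\cong T(S')\) is a Hodge isometry with \(\NS(S)=\NS(S')=U\) unimodular, the trivial gluing lets me extend it to a Hodge isometry \(H^2(S,\bbZ)\cong H^2(S',\bbZ)\), whence \(S\cong S'\) by the Torelli theorem for K3 surfaces. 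Finally the two O'Grady-type vectors \(v,v'\), being primitive of square \(2\) in the unimodular algebraic Mukai lattice \(U^{\oplus 2}\), lie in a single \(O^+(\widetilde{H}(S,\bbZ))\)-orbit and hence differ by an autoequivalence of \(D^b(S)\), which I would promote to a derived equivalence of the associated OG10 manifolds.

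The main obstacle is precisely this last step: unlike the K3\(^{[n]}\) case treated in \cite{maulik2025d}, transferring a derived (auto)equivalence of \(S\) to the \emph{singular} moduli space and its symplectic resolution is not formal, since birationality alone is not yet known to force D-equivalence in OG10 type. I expect this to require either a Fourier--Mukai kernel on \(\widetilde{M}_v(S)\times\widetilde{M}_{v'}(S)\) built from the (twisted) universal object, or an OG10 analogue of the birational-implies-derived-equivalent statement; everything preceding it is routine lattice theory combined with the two Torelli theorems.
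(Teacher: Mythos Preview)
Your argument for birationality is correct and matches the paper's proof essentially verbatim: apply \autoref{lem:big_transcendental} (using \(\rk T(X)=20>13\)) to get a Hodge isometry on transcendental lattices, observe that \(\NS(X')\) is then in the same genus as \(\NS(X)\), extend via \autoref{lem:extension_to_Hodge_isom2}, adjust by \(-\id\) to land in \(O^+\), and invoke Torelli. The paper also makes explicit the divisibility check on the \((-6)\)-class in \(A_2(-1)\) and cites \cite[Theorem~3.13]{FGG:OG10_as_moduli} for the statement that such an \(X\) is birational to a desingularized moduli space on a K3; your ``matching periods'' sketch is the right intuition but this reference is what makes it rigorous.

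Where you diverge is the D-equivalence step, and the obstacle you flag is real for \emph{your} route but is simply bypassed in the paper. You try to realize \(X\) and \(X'\) as \(\widetilde{M}_v(S)\) and \(\widetilde{M}_{v'}(S')\), prove \(S\cong S'\), and then lift an autoequivalence of \(D^b(S)\) to the resolutions---and you correctly note that this last lift is not available off the shelf for OG10. The paper avoids all of this: once \(X\) and \(X'\) are known to be birational and both birational to a desingularized moduli space of sheaves on a K3, it invokes \cite[Theorem~4.5.1]{halpern2020derived}, which gives D-equivalence directly for such birational models. So there is no need to identify the K3 surfaces, compare Mukai vectors, or build any kernel by hand; the single missing ingredient in your proposal is awareness of this black box.
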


\begin{proof}
Since \(\disc T(X)=1\), the primitive vector \(\sigma\in A_2(-1) \subset \NS(X)\) with \(\sigma^2=-6\) and of divisibility \(3\) in \(A_2(-1)\) has then divisibility \(3\) also in \(H^2(X,\bbZ)\).
    By \cite[Theorem 3.13]{FGG:OG10_as_moduli} we have that \(X\) is birational to a moduli space of sheaves on a K3 surface. Moreover, we have \(\rk T(X)=20>13\) and by \autoref{lem:big_transcendental} there is a Hodge isometry \(T(X)\cong T(X')\) in view of \autoref{rmk:lattices_for_defos}.
    In particular, \(T(X')\) is also unimodular and from this it follows that \(\NS(X')\) has the same discriminant form as \(\NS(X)\) (which is the one of \(H^2(X,\bbZ)\)), hence \(\NS(X)\) and \(\NS(X')\) are in the same genus. 
    Since the two lattices are even indefinite and \(\rk \NS(X)\geq l(\NS(X))+2\), we can extend the Hodge isometry \(T(X)\cong T(X')\) to a Hodge isometry \(H^2(X,\bbZ)\cong H^2(X',\bbZ)\) by \autoref{lem:extension_to_Hodge_isom2}. Up to composing it with \(-\id\), the Hodge isometry can be taken to be monodromy by \cite[Theorem 5.4]{onorati_monodromy} and the Torelli Theorem for hyper-Kähler manifolds \cite[Theorem 1.3]{markman2011survey} implies that \(X\) and \(X'\) are birational. 
    
    In particular, they are birational to a moduli space of sheaves on a K3 surface and then by \cite[Theorem 4.5.1]{halpern2020derived} they are D-equivalent.

\end{proof}

\subsubsection{Hilbert schemes of points on a K3 surface}

We now consider manifolds of K3\(^{[n]}\) type that are birational so Hilbert schemes of points on a K3 surface.

\begin{proposition}\label{prop:hillbert_scheme}
    Let \(X\) birational to \(S^{[n]}\) for a K3 surface \(S\) with \(U\subseteq \NS(S)\), and assume that \(\End(T(X))=\bbZ\) and \(\rk T(X)\not= 4\). If \(X'\) is a manifold of K3\(^{[n]}\) type which is L-equivalent to \(X\) and \(\disc T(X)=\disc T(X')=m\) is coprime to \(2(n-1)\), then \(X\) and \(X'\) are D-equivalent.
\end{proposition}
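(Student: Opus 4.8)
The plan is to use L-equivalence to produce a Hodge isometry of transcendental lattices, promote it to a parallel-transport Hodge isometry of \(H^2\), and then conclude with the Torelli theorem and \cite{maulik2025d}.

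First I would extract the transcendental isometry exactly as in \autoref{prop:OG10_moduli_spaces}. Since \(X\) and \(X'\) are L-equivalent, the homomorphism \(\Hdg_{\bbZ}\) (which factors through \(K_0(\Var_\bbC)[\bbL^{-1}]\)) gives \([H^2(X,\bbZ)]=[H^2(X',\bbZ)]\in\HS_{\bbZ,2}\), hence \([T(X)]=[T(X')]\) after applying \(\mathcal{T}\). Together with \(\End(T(X))=\bbZ\) and the hypothesis \(\disc T(X)=\disc T(X')=m\), \autoref{lem:anti_isom} yields a Hodge isometry \(T(X)\cong T(X')(\pm1)\). As \(X\) and \(X'\) are projective their transcendental lattices have signature \((2,\rk T(X)-2)\), so \(\rk T(X)\neq4\) and \autoref{rmk:signatures} exclude the scaling by \(-1\), leaving a genuine Hodge isometry \(f\colon T(X)\cong T(X')\).

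Next I would control the Néron--Severi lattices. Because \(X\) is birational to \(S^{[n]}\) we have \(\NS(X)=\NS(S)\oplus\bbZ\delta\) with \(\delta^2=-2(n-1)\), and since \(U\subseteq\NS(S)\) is unimodular it splits off as an orthogonal summand; thus \(\NS(X)\) is even, indefinite and satisfies \(\rk\NS(X)\geq l(D(\NS(X)))+2\). The coprimality \(\gcd(m,2(n-1))=1\) is what makes the discriminant form transparent: the gluing subgroup has order prime to \(2(n-1)\), so \(D(\NS(X))\) splits as an \(m\)-part anti-isometric to \(D(T(X))\) and a \(2(n-1)\)-part equal to \(D(H^2(X,\bbZ))\), \emph{independently of the chosen embedding}. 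Since \(T(X)\cong T(X')\) and \(H^2(X,\bbZ)\cong H^2(X',\bbZ)\) as lattices (both being of K3\(^{[n]}\) type), the lattices \(\NS(X)\) and \(\NS(X')\) then have the same signature, parity and discriminant form, hence lie in the same genus, and \autoref{lem:extension_to_Hodge_isom2} extends \(f\) to a Hodge isometry \(h\colon H^2(X,\bbZ)\cong H^2(X',\bbZ)\).

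The main obstacle is the last step, since for K3\(^{[n]}\) type the Torelli theorem requires \(h\) to be a parallel-transport operator, that is an element of \(\hat{O}^+\) acting by \(\pm\id\) on the discriminant \(D(H^2)\cong\bbZ/2(n-1)\). Here the coprimality enters a second time: the gluing matched by \(f\) lives in the \(m\)-part of \(D(\NS(X'))\), which is coprime to \(2(n-1)\), so following the proof of \autoref{lem:extension_to_Hodge_isom2} and using the surjectivity of \(O(\NS(X'))\to O(D(\NS(X')))\) I can choose \(h\) to act as \(\pm\id\) on the \(2(n-1)\)-part, i.e. on \(D(H^2(X',\bbZ))\). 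After composing with \(-\id\) if needed to correct the orientation, \(h\) becomes a parallel-transport operator, so the Hodge-theoretic Torelli theorem \cite[Theorem 1.3]{markman2011survey} shows that \(X\) and \(X'\) are birational. Being of K3\(^{[n]}\) type, they are therefore D-equivalent by \cite[Theorem 1.2]{maulik2025d}.
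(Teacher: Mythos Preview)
Your proof is correct and follows essentially the same strategy as the paper: extract a Hodge isometry of transcendental lattices via \autoref{lem:anti_isom} and \autoref{rmk:signatures}, use the coprimality of $m$ and $2(n-1)$ to show that $\NS(X)$ and $\NS(X')$ lie in the same genus, extend to a Hodge isometry of $H^2$ via \autoref{lem:extension_to_Hodge_isom2}, adjust it to a parallel-transport operator, and then apply Torelli plus \cite{maulik2025d}. Your account of the discriminant-form step---splitting $D(\NS(X'))$ into its $m$-part and $2(n-1)$-part because the gluing subgroup has order prime to $2(n-1)$---is in fact a bit sharper than the paper's phrasing (which speaks of a ``trivial embedding subgroup'') but leads to the same conclusion.
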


\begin{proof}
Since \(\disc T(X)=\disc T(X')\), \(\End(T(X))=\bbZ\) and \(\rk T(X)\not=4\), then by \autoref{lem:anti_isom} and \autoref{rmk:signatures} we have a Hodge isometry \(T(X)\cong T(X')\).

    Firstly, we want to show that \(D({\NS(X)})\) is isometric to \(D({\NS(X')})\).
    By hypothesis, the discriminant group \(D({T(X')})\) does not contain any subgroup isomorphic to \(\mathbb{Z}/2(n-1)\mathbb{Z}\) so that the primitive embedding \(T(X')\subset H^2(X',\bbZ)\) has trivial embedding subgroup and then the discriminant of the orthogonal complement \( \NS(X')\) is given by \(D({\NS(X')})\cong D({T(X')})(-1)\oplus D({[-2(n-1)]})\), which is isometric to the discriminant form of \(\NS(X)\cong \NS(S)\oplus [-2(n-1)]\) from the fact that \(D(\NS(S))\cong D(T(S))(-1)\cong D(T(X))(-1)\).
    
    From the fact that \(U \subseteq \NS(X)\) we have \(\rk \NS(X)\geq l(\NS(X))+2\). 
Moreover, since \(\NS(X)\) and \(\NS(X')\) have the same genus we are in the place to apply \autoref{lem:extension_to_Hodge_isom2} to get a Hodge isometry \(H^2(X,\bbZ)\cong H^2(X',\bbZ)\). Up to composing with \(-\id\) the Hodge isometry can be chosen to be orientation-preserving. Moreover, since \(\End(T(X))=\bbZ\) and \(O(\NS(X))\to O(D(\NS(X)))\) is surjective with the fact that \(D({\NS(X)})\cong D({T(X)})(-1)\oplus D({[-2(n-1)]})\), the Hodge isometry can be chosen to act as \(\pm 1\) on the discriminant and hence by \cite[Corollary 9.9]{markman2011survey} we have that \(X\) and \(X'\) are birational. Now, by \cite[Theorem 1.2]{maulik2025d} we know that they are also D-equivalent.

\end{proof}

%Stevell Muller, 

\color{black}

 \bibliographystyle{abbrv}
\bibliography{References}

\end{document}